\date{2012.12}
\def\Z{\mathbb Z}
\def\L{\mathbb L}
\def\b{{\bf b}}
\def\x{{\bf x}}
\def\y{{\bf y}}
\def\pf{{\;+_{_F}\:}}
\newtheorem{thm}{Theorem}[section]
\newtheorem{lem}{Lemma}[section]
\newtheorem{prop}{Proposition}[section]
\newtheorem{Def}{Definition}[section]
\newtheorem{conj}{Conjecture}[section]
\author[M. Nakagawa, H. Naruse]
{Masaki Nakagawa
\and
Hiroshi Naruse
}
\email{nakagawa@okayama-u.ac.jp}
\email{rdcv1654@okayama-u.ac.jp}
\title
{Universal factorial Schur $P,Q$-functions\\
 and their duals}
\address{
Graduate School of Education, Okayama University,
Tsushima-naka, Kitaku, Okayama 700-8530, JAPAN}
\keywords{factorial Schur $P,Q$-function, Lazard ring, Schubert basis}
\begin{document}
\maketitle
\begin{abstract}

We define universal factorial Schur $P,Q$-functions 
and their duals,
which specialize to generalized (co)-homology
``Schubert basis" for loop spaces of the classical groups. 
We also investigate some of their properties.

\end{abstract}

\setcounter{section}{-1}

\section{Introduction}

By the work of Pragacz \cite{Pra},
Schur $P,Q$-functions are known as Schubert basis of the cohomology rings of 
maximal orthogonal or Lagrangian grassmannians.
There are also factorial versions and they can be interpreted as
torus equivariant cohomology Schubert basis.
Recently these are extended to equivariant $K$-theory case \cite{Ike-Nar}.
On the way of studying known results \cite{Cla,Bak} on 
$K$-homology of loop  spaces of the classical groups
$Sp=Sp(\infty)$ and $SO=SO(\infty)$, 
the first author noticed that the $K$-homology of these spaces can be
realized as subspaces of the ring of symmetric functions \cite{Nak}. 
Combining these together with the Cauchy type kernel, we recognized that
these realization can be extended to generalized (co)-homology setting.

\section{Lazard ring $\L$ and formal group law}

In \cite{Laz} Lazard considered a universal commutative formal group law 
of rank one now called the Lazard ring.
Let $\L=\L_{*}$ be the Lazard ring and
$F_{\L}(u,v)$ be the universal formal group law (For a construction 
and basic properties of $\L$, see e.g. \cite{Lev-Mor}):

$$F_{\L}(u,v)=\sum_{i,j} a_{i,j} u^i v^j\in \L[[u,v]].$$

This is a formal power series in $u,v$ with coefficients $a_{i,j}$ of formal variables
which satisfies the axiom of formal group law.
We will write $a\pf b=F_{\L}(a,b)$
and $\overline{a}=\chi_{_{\L}}(a)$, the inverse of $a$, i.e. which satisfies
$a\pf \overline{a}=0$.
It is known that
$\overline{a}\in \L[[a]]$ is a formal power series in $a$ with initial term $-a$ and
first few terms appear in P.41 of \cite{Lev-Mor}.

The grading of $\L_{*}$ is as follows. The homological degree is defined by
$\deg_h(a_{i,j})=i+j-1$.
It is known that $\L_{*}$ is isomorphic to the polynomial ring 
in countably infinite number of variables with  integer coefficients.
There is a symmetric function realization of this ring by Lenart \cite{Len}.

\section{Cohomological basis}

We provide the variables
$\x=(x_1,x_2,\ldots)$ and $\b=(b_1,b_2,\ldots)$ with degree
$\deg(x_i)=\deg(b_i)=1$.

\subsection{Definition of $P^\L_\lambda,Q^\L_\lambda$}

\label{formal factorial powers}

For an integer $k\geq 1$, we set
$[t|\b]^k=\displaystyle\prod_{i=1}^{k}(t\pf b_i)$ and
$[[t|\b]]^{k+1}=(t\pf t)[t|\b]^k$.
We also set $[t|\b]^0=[[t|\b]]^0=1$.
For a partition $\lambda=(\lambda_1,\ldots,\lambda_r)$, 	
we set
$$[x|\b]^\lambda:=\displaystyle\prod_{i=1}^{r} [x_i|\b]^{\lambda_i}
\text{ and }
[[x|\b]]^\lambda:=\displaystyle\prod_{i=1}^{r} [[x_i|\b]]^{\lambda_i}.$$

Let  
$\lambda=(\lambda_1,\ldots,\lambda_r)$
be a strict partition of length $r$, i.e. 
a sequence of positive integers 
such that $\lambda_1>\cdots>\lambda_r>0.$ 
We denote by $\mathcal{SP}$ the set of all strict partitions, and
$\mathcal{SP}_n$ the subset of $\mathcal{SP}$ consisting of 
strict partitions of length $r\leq n.$
The following definition with coefficients in $\L[[\b]]$  
was suggested by Anatol Kirillov, 
and
we thank him for this.

\begin{Def}(Universal factorial Schur $P,Q$-functions)\label{defPQ} 
(cf.\cite{Ike-Nar} Def. 2.1)\\
For a strict partition $\lambda=(\lambda_1,\ldots,\lambda_r)$  ($r\leq n$), we define 
\begin{eqnarray*}
P^\L_\lambda(x_1,\ldots,x_n |\:\b)&:=&
\displaystyle\frac{1}{(n-r)!}
\sum_{w\in S_n}w\left[
[x\:|\:\b]^{\lambda}
\prod_{i=1}^r
\prod_{j=i+1}^n \frac{x_{i}\pf x_{j}}{x_{i}\pf \overline{x}_{j}}\right],
\label{def:GP}\\
Q^\L_\lambda(x_1,\ldots,x_n |\:\b)&:=&
\displaystyle\frac{1}{(n-r)!}
\sum_{w\in S_n}w\left[
[[x\:|\:\b]]^{\lambda}
\prod_{i=1}^r
\prod_{j=i+1}^n \frac{x_{i}\pf x_{j}}{x_{i}\pf \overline{x}_{j}}
\right].\label{def:GQ}\\
\end{eqnarray*}
\end{Def}

These are symmetric functions in the variables $(x_1,\ldots,x_n)$ and
formal series in $b_1,b_2,\ldots,b_{\lambda_1}$
by definition.
We call these formal series the 
{\it universal factorial Schur $P,Q$- functions}.
Note that 
$$t\pf \overline{s}=(t-s)(1+\text{higher degree terms in } t\text{ and }s
\text{ with coefficients in }\L).$$ 
This means that
$P^\L_\lambda(x_1,\ldots,x_n|\:\b)$ and
$Q^\L_\lambda(x_1,\ldots,x_n|\:\b)$
are well defined in $\L[[\b]][[x_1,\ldots,x_n]]$.

We also set 

$P^\L_\lambda(x_1,\ldots,x_n):=P^\L_\lambda(x_1,\ldots,x_n|\:0)$
,
$Q^\L_\lambda(x_1,\ldots,x_n):=Q^\L_\lambda(x_1,\ldots,x_n|\:0)$.

\subsection{$\L$-supersymmetric series}


$\L$-supersymmetric formal series is defined as follows.

\begin{Def}($\L$-supersymmetric series)

A formal power series $f(x_1,\ldots,x_n)$ with coefficients in 
$\L$ 
 is called
{$\L$-supersymmetric} if

\begin{itemize}
\item[(1)] $f(x_1,\ldots,x_n)$ is symmetric in variables $x_1,\ldots,x_n$ , and
\item[(2)] $f(t,\overline{t},x_3,\ldots,x_n)$ does not depend on $t$.
\end{itemize}

\end{Def}

We denote by
$\Gamma^{\L}(\x_n)$ the space of 
$\L$-supersymmetric formal series in $x_1,\ldots,x_n$ with coefficients in $\L$.
We also define
$\Gamma_{+}^{\L}(\x_n)$ to be the subspace of $\Gamma^{\L}(\x_n)$ consisting of 
$f(x_1,\ldots,x_n)\in \Gamma^{\L}(\x_n)$ such that
$f(t,x_2,\ldots,x_n)-f(0,x_2,\ldots,x_n)$ is divisible by $t \pf t$.


\vspace{0.5cm}

\begin{prop}
$P^{\L}_\lambda(x_1,\ldots,x_n)\in \Gamma^{\L}(\x_n)$ and 
$Q^{\L}_\lambda(x_1,\ldots,x_n)\in \Gamma_{+}^{\L}(\x_n)$.

\end{prop}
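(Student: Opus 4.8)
The plan is to check the defining conditions of $\Gamma^\L(\x_n)$ and $\Gamma^\L_+(\x_n)$ directly from the symmetrized formulas. Condition (1), symmetry in $x_1,\ldots,x_n$, is immediate: for $\sigma\in S_n$ the operator $\sigma$ merely reindexes the sum $\sum_{w\in S_n}$, so it fixes both $P^\L_\lambda$ and $Q^\L_\lambda$. The content is condition (2), together with the extra divisibility required for $Q^\L_\lambda$.

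For condition (2) I would substitute $x_1=t$, $x_2=\overline{t}$ and examine the sum summand by summand, working in the localization of $\L[[t,x_3,\ldots,x_n]]$ in which the individual terms $w[\,\cdots]$ make sense (their sum is a genuine power series, so nothing is lost). In the summand indexed by $w$, let $a$ and $b$ be the positions carrying $t$ and $\overline{t}$. The key point is that the product $\prod_{i=1}^r\prod_{j=i+1}^n \frac{x_i\pf x_j}{x_i\pf\overline{x}_j}$ contains, after the action of $w$, the single factor whose two slots are $a$ and $b$ precisely when $\min(a,b)\le r$; in that factor the two arguments are $t$ and $\overline{t}$ in some order, so its numerator is $t\pf\overline{t}=0$ while its denominator is the nonzero series $t\pf t$ (using $\overline{\overline{t}}=t$). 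Hence every summand in which $t$ or $\overline{t}$ lands in one of the first $r$ slots vanishes.

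Thus only summands with $a,b>r$ survive. For these the monomial factor $[x|0]^\lambda$ (respectively $[[x|0]]^\lambda$) involves only the slots $\le r$, i.e.\ only the variables $x_3,\ldots,x_n$, while $t$ and $\overline{t}$ appear solely as the variable $x_j$ (with $j>i$) of the factors; their total contribution is $\prod_{i=1}^r\frac{x_{w(i)}\pf t}{x_{w(i)}\pf\overline{t}}$ times $\prod_{i=1}^r\frac{x_{w(i)}\pf\overline{t}}{x_{w(i)}\pf t}$, which is $1$. Therefore every surviving summand, and hence the whole sum, is independent of $t$; combined with symmetry this yields $P^\L_\lambda,Q^\L_\lambda\in\Gamma^\L(\x_n)$.

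For $Q^\L_\lambda\in\Gamma^\L_+(\x_n)$ I substitute $x_1=t$ alone and compare with $t=0$. If $t$ occupies a slot $i\le r$, the factor $[[t|0]]^{\lambda_i}$ carries $t\pf t$ (since $[[t|\b]]^{k+1}=(t\pf t)[t|\b]^k$ and $\lambda_i\ge1$), so such summands are divisible by $t\pf t$ and vanish at $t=0$; consequently the remaining summands, evaluated at $t=0$, already reproduce $Q^\L_\lambda(0,x_2,\ldots,x_n)$, because their only $t$-dependence $\prod_{i=1}^r\frac{x_{w(i)}\pf t}{x_{w(i)}\pf\overline{t}}$ equals $1$ at $t=0$. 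It then remains to see that $\frac{s\pf t}{s\pf\overline{t}}-1$ is divisible by $t\pf t$; this follows from the elementary divisibility $F_{\L}(x,y)-F_{\L}(x,z)=(y\pf\overline{z})\,h(x,y,z)$ in $\L[[x,y,z]]$ (valid because $F_{\L}(x,y)-F_{\L}(x,z)$ is divisible by $y-z$ and $y\pf\overline{z}=(y-z)\cdot(\text{unit})$), which at $z=\overline{t}$ gives $F_{\L}(s,t)-F_{\L}(s,\overline{t})=(t\pf t)\,h(s,t,\overline{t})$. Each factor is then $1+(t\pf t)(\cdots)$, so the product minus $1$, and thus $Q^\L_\lambda(t,x_2,\ldots,x_n)-Q^\L_\lambda(0,x_2,\ldots,x_n)$, is divisible by $t\pf t$.

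The main obstacle is the bookkeeping in condition (2): correctly identifying which summands survive the specialization $x_2=\overline{t}$ and verifying that the $t$ and $\overline{t}$ factors of the survivors cancel in pairs. A secondary technical point is passing from divisibility in the localization back to $\L[[\x_n]]$, which is legitimate because $t\pf t$ is a non--zero--divisor and each denominator $x_{w(i)}\pf\overline{t}$ is a prime not dividing $t\pf t$; once these are settled, the divisibility lemma makes the $Q$-case routine.
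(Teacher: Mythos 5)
Your proof is correct and is essentially the paper's own approach: the paper's proof simply defers to Propositions 3.1 and 3.2 of \cite{Ike-Nar}, whose arguments are exactly this substitution $x_1=t$, $x_2=\overline{t}$ (resp.\ $x_1=t$) with term-by-term vanishing and pairwise cancellation over $S_n$, transplanted from the $K$-theory formal group law to the universal one. Your extra care with the localization (the factors $u\pf\overline{v}$ being primes not dividing $t\pf t$, so that divisibility descends to the power series ring) just fills in details that the cited proof also treats implicitly.
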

\begin{proof}
Similar to the proof of Prop 3.1 and Prop. 3.2 in \cite{Ike-Nar}.

\end{proof}

For a positive integer $n$ we set $\rho_{n}=(n,n-1,\ldots,1)$.
Let $\mathcal{P}_n$ be the set of all partitions of length $\leq n$.

For a partition $\lambda=(\lambda_1,\ldots,\lambda_n)\in \mathcal{P}_n$, we define
$$s^{\L}_\lambda(x_1,\ldots,x_n|\:\b):=
\displaystyle
\sum_{w\in S_n}w\left[\frac{[x\:|\:\b]^{\lambda+\rho_{n-1}}}
{\prod_{1\leq i<j\leq n}(x_{i}\pf \overline{x}_{j})}
\right]
\text{ and }
s^{\L}_\lambda(x_1,\ldots,x_n):=s^{\L}_\lambda(x_1,\ldots,x_n|\:0).
$$

Note that 
$s^{\L}_\lambda(x_1,\ldots,x_n)=s_\lambda(x_1,\ldots,x_n)+$ higher terms and
$s_{\emptyset}^{\L}(x_1,\ldots,x_n)\neq 1$.

\begin{prop}(Factorization) (cf.\cite{Ike-Nar} Prop. 2.3.)

\begin{itemize}

\item[(1)] For a positive integer $n$ we have
$$P^{\L}_{\rho_{n-1}}(x_1,\ldots,x_n)=
\left(\displaystyle
\prod_{1\leq i<j\leq n}(x_i\pf x_j)\right)
s_{\emptyset}^{\L}(x_1,\ldots,x_n),$$
$$Q^{\L}_{\rho_{n}}(x_1,\ldots,x_n)=
\left(\displaystyle
\prod_{1\leq i\leq j\leq n}(x_i\pf x_j)\right)
s_{\emptyset}^{\L}(x_1,\ldots,x_n).$$
\item[(2)] For a partition  $\lambda=(\lambda_1\geq \lambda_2\geq\cdots\geq\lambda_n\geq0)$,
$$P^{\L}_{\rho_{n-1}+\lambda}(x_1,\ldots,x_n|\:\b)=
\left(\displaystyle
\prod_{1\leq i<j\leq n}(x_i\pf x_j)\right)
s^{\L}_\lambda(x_1,\ldots,x_n|\:\b),$$
$$Q^{\L}_{\rho_{n}+\lambda}(x_1,\ldots,x_n|\:\b)=
\left(\displaystyle
\prod_{1\leq i\leq j\leq n}(x_i\pf x_j)\right)
 s^{\L}_\lambda(x_1,\ldots,x_n|\:\b).$$
\end{itemize}
\end{prop}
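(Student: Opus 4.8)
The plan is to prove the general identities in (2) by a direct computation from Definition~\ref{defPQ}, and then to read off (1) as the specialization $\lambda=\emptyset$, $\b=0$. The computation hinges on one preliminary bookkeeping fact about lengths. For $\lambda=(\lambda_1\geq\cdots\geq\lambda_n\geq0)$ the shifted sequence $\rho_{n-1}+\lambda=(n-1+\lambda_1,\ldots,1+\lambda_{n-1},\lambda_n)$ is a strict partition, and its length $r$ equals $n$ if $\lambda_n>0$ and $n-1$ if $\lambda_n=0$; in either case $(n-r)!=1$ and, since $r\geq n-1$, the double product $\prod_{i=1}^r\prod_{j=i+1}^n$ occurring in $P^\L$ runs over \emph{all} pairs $1\leq i<j\leq n$. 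The analogue for $\rho_n+\lambda=(n+\lambda_1,\ldots,1+\lambda_n)$ is simpler: its smallest part $1+\lambda_n$ is positive, so $r=n$, whence $(n-r)!=1$ and the double product is again the full product. I would establish these facts first, so that the prefactor disappears and the products range over all $i<j$.

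The next step is to exploit commutativity of the formal group law. Since $a\pf b=b\pf a$, the product $\prod_{1\leq i<j\leq n}(x_i\pf x_j)$ depends only on the multiset of unordered pairs and is therefore $S_n$-invariant; similarly $\prod_{i=1}^n(x_i\pf x_i)$ is symmetric. For the $P$-identity I would write the summand of $P^\L_{\rho_{n-1}+\lambda}$ as $[x\,|\,\b]^{\rho_{n-1}+\lambda}\big(\prod_{i<j}(x_i\pf x_j)\big)\big/\big(\prod_{i<j}(x_i\pf\overline{x}_j)\big)$, pull the symmetric numerator $\prod_{i<j}(x_i\pf x_j)$ out of $\sum_{w}w[\,\cdot\,]$ using $w(S\cdot f)=S\cdot w(f)$ for symmetric $S$, and recognize the remaining operator sum $\sum_{w}w\big[[x\,|\,\b]^{\rho_{n-1}+\lambda}/\prod_{i<j}(x_i\pf\overline{x}_j)\big]$ as precisely $s^\L_\lambda(x_1,\ldots,x_n\,|\,\b)$ by its definition. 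This gives the first identity in (2).

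For the $Q$-identity the extra ingredient is to unwind the doubled bracket. Using $[[t\,|\,\b]]^{k+1}=(t\pf t)[t\,|\,\b]^k$ together with the identity $(\rho_n+\lambda)_i-1=(\rho_{n-1}+\lambda)_i$, I would rewrite $[[x\,|\,\b]]^{\rho_n+\lambda}=\big(\prod_{i=1}^n(x_i\pf x_i)\big)[x\,|\,\b]^{\rho_{n-1}+\lambda}$. Then the extracted diagonal factor combines with the off-diagonal product into the single symmetric product $\prod_{i=1}^n(x_i\pf x_i)\prod_{i<j}(x_i\pf x_j)=\prod_{1\leq i\leq j\leq n}(x_i\pf x_j)$, which factors out of $\sum_w w[\,\cdot\,]$ exactly as before, leaving $s^\L_\lambda(x_1,\ldots,x_n\,|\,\b)$. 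Finally, setting $\lambda=\emptyset$ and $\b=0$ in the two identities yields (1).

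I expect the only genuinely delicate point to be the length-and-factorial bookkeeping of the first paragraph, and in particular the case distinction $\lambda_n=0$ versus $\lambda_n>0$ for $P^\L$, which is what guarantees that the prefactor $1/(n-r)!$ equals $1$ and that the partial double product coincides with the full product over $i<j$. Once that is settled, the rest is a formal rearrangement of the defining expressions that proceeds in parallel with the proof of Prop.~2.3 in \cite{Ike-Nar}, with the universal formal group sum $\pf$ playing the role of the group operation there.
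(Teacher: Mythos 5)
Your proof is correct and follows exactly the route the paper intends: the paper's own proof is just the remark that ``these equations follow from the definition,'' and your argument is precisely that direct computation, with the length/factorial bookkeeping, the symmetry of $\prod_{i<j}(x_i\pf x_j)$ under $S_n$ (from commutativity of $F_\L$), and the unwinding $[[x|\b]]^{\rho_n+\lambda}=\bigl(\prod_{i=1}^n(x_i\pf x_i)\bigr)[x|\b]^{\rho_{n-1}+\lambda}$ carried out in full. No gaps; the details you supply are exactly the ones the paper leaves implicit.
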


\begin{proof}

These equations follow from the definition.

\end{proof}


\begin{thm}(Basis theorem)

\begin{itemize}
\item[(1)] The polynomials $P^{\L}_\lambda(x_1,\ldots,x_n)$ 
($\lambda\in \mathcal{SP}_n$)
form a formal $\L$-basis of $\Gamma^{\L}(\x_n)$.

\item[(2)] The polynomials $Q^{\L}_\lambda(x_1,\ldots,x_n)$ 
($\lambda\in \mathcal{SP}_n$)
 form a formal $\L$-basis of $\Gamma^{\L}_{+}(\x_n)$.
\end{itemize}

\end{thm}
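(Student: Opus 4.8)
The plan is to reduce everything to the classical Schur $P$- and $Q$-function basis theorem by passing to lowest-degree terms in the variables $\x_n=(x_1,\ldots,x_n)$. Assign $x_i,b_i$ degree $1$ and give $a_{i,j}$ the (nonpositive, cohomological) degree $-(i+j-1)$ that makes $F_{\L}$ homogeneous; write $\L_{(k)}$ for the homogeneous component of homological degree $k$, so that $\L_{(0)}=\Z$ and each $\L_{(k)}$ is free of finite rank over $\Z$. Since $t\pf s=t+s+(\text{higher order})$ and $\overline{t}=-t+(\text{higher order})$, the polynomial $P^{\L}_\lambda(x_1,\ldots,x_n)$ is homogeneous of total degree $|\lambda|$, and because every $\L$-coefficient has nonpositive degree, its monomials in $\x_n$ all have degree $\ge|\lambda|$, the bottom layer having coefficients in $\L_{(0)}=\Z$. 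This bottom layer is computed by the ring homomorphism $\pi\colon \L\to\Z$ with $t\pf s\mapsto t+s$ (reduction modulo the augmentation ideal), which sends the defining sum verbatim to the classical expression; hence $\pi(P^{\L}_\lambda)=P_\lambda$ is the ordinary Schur $P$-function. Moreover $\pi(\overline{t})=-t$ and $\pi(t\pf t)=2t$, so $\pi$ carries the defining conditions of $\Gamma^{\L}(\x_n)$ to the classical ones cutting out the $\Z$-span of the $P_\lambda$.

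For the spanning statement I would argue by successive approximation on the lowest $\x_n$-degree. Let $f\in\Gamma^{\L}(\x_n)$ be homogeneous of total degree $d$. Its bottom layer (degree $d$ in $\x_n$, coefficients in $\Z$) lies in the classical space, so by the classical basis theorem it equals $\sum_{|\lambda|=d}c_\lambda P_\lambda$ with unique $c_\lambda\in\Z$. By the Proposition above, $P^{\L}_\lambda\in\Gamma^{\L}(\x_n)$, so $f-\sum_{|\lambda|=d}c_\lambda P^{\L}_\lambda\in\Gamma^{\L}(\x_n)$; this cancels the bottom layer and raises the lowest $\x_n$-degree by at least one, its new bottom layer now having coefficients in $\L_{(1)}$ and lying in $\L_{(1)}\otimes_\Z\Gamma^{\Z}(\x_n)$, where it is expanded in the $P_\lambda$ with $|\lambda|=d+1$. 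Iterating produces the expansion $f=\sum_{k\ge0}\sum_{|\lambda|=d+k}c^{(k)}_\lambda P^{\L}_\lambda$ with $c^{(k)}_\lambda\in\L_{(k)}$. Only finitely many terms affect any fixed $\x_n$-degree, so the sum converges and exhibits $f$ in the formal $\L$-span of the $P^{\L}_\lambda$.

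For linear independence, suppose $\sum_\lambda c_\lambda P^{\L}_\lambda=0$; decomposing by total degree we may assume the relation homogeneous of degree $d$, so $c_\lambda\in\L_{(|\lambda|-d)}$ is nonzero only for $|\lambda|\ge d$. Extracting the lowest $\x_n$-degree layer, only the $\lambda$ with $|\lambda|=d$ survive, giving $\sum_{|\lambda|=d}c_\lambda P_\lambda=0$ with $c_\lambda\in\Z$; classical linear independence of the $P_\lambda$ forces these $c_\lambda=0$. Deleting them and passing to the next layer yields $\sum_{|\lambda|=d+1}c_\lambda P_\lambda=0$ with $c_\lambda\in\L_{(1)}$, and freeness of $\L_{(1)}$ over $\Z$ together with classical independence again gives $c_\lambda=0$. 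Induction on the layer annihilates every $c_\lambda$.

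The main obstacle is the claim used repeatedly above: that the lowest $\x_n$-degree layer of an arbitrary element of $\Gamma^{\L}(\x_n)$ is genuinely classically supersymmetric, i.e.\ lies in $\L_{(k)}\otimes_\Z\Gamma^{\Z}(\x_n)$. This requires checking that both defining conditions survive the passage to lowest degree, which rests on the linearizations $\pi(\overline{t})=-t$ and $\pi(t\pf t)=2t$ and on the fact that these conditions are $\Z$-linear, hence testable in each homological graded piece of $\L$ separately. Granting this, together with the classical statement that the $P_\lambda$ (resp.\ $Q_\lambda$) with $\lambda\in\mathcal{SP}_n$ form a $\Z$-basis of the classical $P$-supersymmetric (resp.\ $Q$-supersymmetric) functions in $n$ variables, the argument closes. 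Part (2) is proved verbatim with $P$ replaced by $Q$, $\Gamma^{\L}$ by $\Gamma^{\L}_{+}$, and $[x\,|\,\b]^\lambda$ by $[[x\,|\,\b]]^\lambda$, using $\pi(Q^{\L}_\lambda)=Q_\lambda$ (whose bottom layer picks up the factor $2^{\ell(\lambda)}$ from $t\pf t\mapsto 2t$).
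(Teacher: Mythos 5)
Your proposal is correct and takes essentially the same route as the paper: the paper's proof simply defers to the strategy of Theorem 3.1 of Ikeda--Naruse, which is precisely the argument you spelled out --- reduce $P^{\L}_\lambda$, $Q^{\L}_\lambda$ and the defining conditions of $\Gamma^{\L}(\x_n)$, $\Gamma^{\L}_{+}(\x_n)$ to their classical counterparts by passing to the lowest-degree layer with respect to the grading of $\L$, then get spanning by successive approximation and independence by layer-by-layer extraction. The only point to tidy up is that a general element of $\Gamma^{\L}(\x_n)$ is an infinite sum of total-degree-homogeneous components, so one should note that for each fixed $\lambda$ only finitely many components contribute to the coefficient of $P^{\L}_\lambda$ (forced by each $x$-coefficient of $f$ lying in $\L$ itself), which keeps the expansion coefficients in $\L$.
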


\begin{proof}
We can follow the same strategy as in Theorem 3.1 of \cite{Ike-Nar}.
\end{proof}

\subsection{Vanishing property}

For a strict partition $\mu=(\mu_1,\ldots,\mu_r)$ of length $r$, we set
$\overline{\b}_\mu=(\overline{b}_{\mu_1},\ldots,\overline{b}_{\mu_r},0,0,\ldots)$.
We also set \\
${\rm sh}(\mu)=(\mu_1+1,\ldots,\mu_r+1)$ if $r$ is even and
${\rm sh}(\mu)=(\mu_1+1,\ldots,\mu_r+1,1)$ if $r$ is odd 
(We consider only even variable case for 
$P^\L_\lambda(x_1,\ldots,x_{2n}|\:\b)$ because of stability. \cite{Ike-Nar} Remark 3.1)
.

\begin{prop}(Vanishing)\label{vanishing}
\begin{itemize}
\item[(1)] $P^\L_\lambda(\overline{\b}_{{\rm sh}(\mu)}|\:\b)=0$ 
if $\mu\not \supset \lambda$
and \\
$P^\L_\lambda(\overline{\b}_{{\rm sh}(\lambda)}|\:\b)=\displaystyle
\prod_{i=1}^{r}
\left(
\prod_{1\leq j\leq \lambda_i,j\neq \lambda_p+1,
\:\text{ for } i< p\leq r}(\overline{b}_{\lambda_i+1}\pf b_j)
\prod_{j=i+1}^{r}\left( 
\overline{b}_{\lambda_i+1}\pf \overline{b}_{\lambda_j+1}
\right)
\right)
$.
\item[(2)] $Q^\L_\lambda(\overline{\b}_\mu|\:\b)=0$ if $\mu\not \supset \lambda$
and \\
$Q^\L_\lambda(\overline{\b}_\lambda|\:\b)=\displaystyle
\prod_{i=1}^r\left(
\prod_{1\leq j\leq \lambda_i-1,j\neq \lambda_p,\:\text{ for } i< p\leq r}
\left(\overline{b}_{\lambda_i}\pf b_j\right)
\prod_{j=i}^{r} 
\left(\overline{b}_{\lambda_i}\pf \overline{b}_{\lambda_j}\right)
\right)
$.

\end{itemize}
\end{prop}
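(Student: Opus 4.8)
The plan is to follow the localization strategy of \cite{Ike-Nar}: I would evaluate the symmetrization defining $Q^\L_\lambda$ (and $P^\L_\lambda$) summand by summand at the point in question, having first used stability to delete the zero entries of $\overline{\b}_\mu$ so that the substituted variables are pairwise distinct and nonzero. Everything rests on one elementary fact about the factorial powers. Since $\overline{b}_m\pf b_m=0$ and $0\pf 0=0$, the factor $[[\overline{b}_m\,|\,\b]]^k=(\overline{b}_m\pf\overline{b}_m)\prod_{i=1}^{k-1}(\overline{b}_m\pf b_i)$ vanishes exactly when $m<k$, and likewise $[\overline{b}_m\,|\,\b]^k=\prod_{i=1}^{k}(\overline{b}_m\pf b_i)$ vanishes exactly when $1\leq m\leq k$. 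I treat $Q^\L_\lambda$ in detail; the $P^\L_\lambda$ case is identical once $\overline{b}_{\mu_i}$ is replaced by the shifted value $\overline{b}_{\mu_i+1}$, with $\mu$ and $\lambda$ padded by a zero part to even length so that the appended $1$ in ${\rm sh}(\mu)$ is read as the shift $\overline{b}_{0+1}$ of that part (this is the even-variable convention).

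First I would observe that, after substitution, the $w$-summand carries the factorial factor $\prod_{i=1}^{\ell(\lambda)}[[\overline{b}_{\mu_{w(i)}}\,|\,\b]]^{\lambda_i}$, which by the fact above is nonzero precisely when $\mu_{w(i)}\geq\lambda_i$ for every $i$. Thus a summand can survive only if $i\mapsto w(i)$ gives an injection of the rows of $\lambda$ into the parts of $\mu$ with $\mu_{w(i)}\geq\lambda_i$. Because both partitions are strict, a greedy/Hall argument shows such an injection exists if and only if $\mu_i\geq\lambda_i$ for all $i$, i.e.\ $\mu\supset\lambda$: if $\mu_i<\lambda_i$ for some $i$, then the $i$ rows $\lambda_1>\cdots>\lambda_i$ each demand a part of $\mu$ that is $\geq\lambda_i$, of which there are at most $i-1$ (namely $\mu_1,\ldots,\mu_{i-1}$), contradicting Hall's condition. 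Hence if $\mu\not\supset\lambda$ every summand vanishes and $Q^\L_\lambda(\overline{\b}_\mu\,|\,\b)=0$, which is the first assertion of each item.

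For the principal value $\mu=\lambda$ the same count becomes rigid: reducing to $n=\ell(\lambda)=r$ variables, a summand survives only if $\lambda_{w(i)}\geq\lambda_i$, and strictness forces $w(i)\leq i$ for all $i$, so $w=\mathrm{id}$ and the prefactor is $1/(n-\ell(\lambda))!=1$. It then remains to evaluate the identity summand at $x_i=\overline{b}_{\lambda_i}$ (so $\overline{x}_i=b_{\lambda_i}$): the factors $\overline{b}_{\lambda_i}\pf\overline{b}_{\lambda_i}$ coming from $(x_i\pf x_i)$ combine with the numerator $\prod_{i<j}(\overline{b}_{\lambda_i}\pf\overline{b}_{\lambda_j})$ to produce $\prod_i\prod_{j=i}^{r}(\overline{b}_{\lambda_i}\pf\overline{b}_{\lambda_j})$, while the denominator $\prod_{i<j}(\overline{b}_{\lambda_i}\pf b_{\lambda_j})$ cancels precisely the factors with $k=\lambda_j$ ($j>i$, so $\lambda_j\leq\lambda_i-1$) inside $\prod_{k=1}^{\lambda_i-1}(\overline{b}_{\lambda_i}\pf b_k)$, leaving $\prod_{1\leq j\leq\lambda_i-1,\,j\neq\lambda_p}(\overline{b}_{\lambda_i}\pf b_j)$; this is exactly the stated product. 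The $P^\L_\lambda$ computation is the same with the shift, the appended variable $\overline{b}_1$ supplying for each $i$ the numerator factor $\overline{b}_{\lambda_i+1}\pf\overline{b}_1$ and the denominator factor $\overline{b}_{\lambda_i+1}\pf b_1$ that deletes $j=1$, so that the padded formula with $\overline{b}_{\lambda_r+1}=\overline{b}_1$ is reproduced.

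The real obstacle is not this bookkeeping but the degeneracy of $\overline{\b}_\mu$: its repeated zero entries make the individual rational factors $\frac{x_i\pf x_j}{x_i\pf\overline{x}_j}$ indeterminate ($0/0$), even though the full symmetric series is regular there. I would remove this once and for all by invoking stability (\cite{Ike-Nar} Remark 3.1): deleting a zero variable does not change the value, so $Q^\L_\lambda(\overline{\b}_\mu\,|\,\b)$ equals its value in $\ell(\mu)$ variables at the distinct nonzero arguments $\overline{b}_{\mu_1},\ldots,\overline{b}_{\mu_{\ell(\mu)}}$, where every denominator $\overline{b}_{\mu_i}\pf b_{\mu_j}$ is nonzero by strictness and the summand-by-summand evaluation above is legitimate. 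The remaining case $\ell(\mu)<\ell(\lambda)$ automatically has $\mu\not\supset\lambda$, and there the vanishing is the formal-group analogue of the classical fact that $Q_\lambda$ (resp.\ $P_\lambda$) is zero in fewer than $\ell(\lambda)$ variables; this, together with the even-length padding needed to treat the appended $\overline{b}_1$ for $P^\L_\lambda$ as a genuine (zero) part, is the only remaining technical point.
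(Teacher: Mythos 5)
Your proof is correct and follows essentially the same route as the paper's own argument: evaluate the symmetrization term by term, note that a factorial power $[x\,|\,\b]^{\lambda_k}$ (resp. $[[x\,|\,\b]]^{\lambda_k}$) kills every summand when $\mu\not\supset\lambda$, and observe that at $\mu=\lambda$ only the $w=e$ summand survives and produces the stated product --- the paper compresses all of this into a few lines, deferring the principal specialization to Prop.~7.1 of \cite{Ike-Nar}. Your additional care (the Hall/pigeonhole step for arbitrary $w$, the stability reduction that removes the $0/0$ degeneracy at the repeated zero entries, and the even-length padding convention for $P^\L_\lambda$) fills in precisely the details the paper's sketch leaves implicit, and is sound.
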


\begin{proof}

We will only prove (1). The proof of (2) is similar. We may assume
that the length of $\lambda$ is less than or equal to $n$.
If $\mu\not \supset \lambda$ then there is an index $k\leq r$ such that
$\mu_k<\lambda_k$.
It is easy to see that $[t|\b]^{\lambda_k}$ becomes zero 
when $t=\overline{b}_{\mu_k+1}$.
This means that $P^{\L}_{\lambda}(\overline{\b}_{{\rm sh}(\mu)}|\b)=0$.
For the case of $\mu=\lambda$,
we can see that the terms in $P^{\L}_{\lambda}(x_1,\ldots,x_r|\b)$ other than $w=e$
become zero when we evaluate them at 
$(x_1,\ldots,x_r)=\overline{\b}_{{\rm sh}(\lambda)}$.
The term corresponding to $w=e$ becomes the desired value 
(cf. \cite{Ike-Nar} Prop.7.1).

\end{proof}


\section
{Homological basis 
$\widehat{p}^{\L}_\lambda(\y|\:\b),\widehat{q}^{\L}_\lambda(\y|\:\b)$.}

We use countably infinite variables 
$\y=(y_1,y_2,\ldots)$. The homological degree is $\deg_h(y_i)=1$ for $i=1,2,\ldots$.
Let $\Lambda(\y)$ be the ring of symmetric functions in $\y$ 
with coefficients in $\Z$.

\subsection{One row case}

\begin{Def}
$$\Delta(t;\y):=
\prod_{j=1}^{\infty}\frac{1-\overline{t} y_j}{1-t y_j}.$$
\end{Def}

As $\{ [t|\b]^k\}_{k=0,1,2,\ldots}$ is a formal
$\L[[\b]]\hat{\otimes}\Lambda(\y)$-basis of the ring 
$\L[[\b]][[t]] \hat{\otimes} \Lambda(\y)$ (see Lemma below),
we can expand 
$$\Delta(t;\y)
=\sum_{k=0}^\infty [t|\b]^k\;\widehat{q}^\L_{k}(\y|\:\b)
$$
to get $\widehat{q}^\L_{k}(\y|\:\b)\in \L[[\b]] \hat{\otimes} \Lambda(\y)$.
$\widehat{q}^\L_{k}(\y|\:\b)$ is a formal power series with coefficients in 
$\L[[\b]]$ except 
$\widehat{q}^\L_{0}(\y|\:\b)=1$.

\vspace{0.5cm}

\begin{lem}
If $f(x)$ is in $\L[[x]]$ (or its extension) and becomes zero when $x=\overline{t}$ then $f(x)$ is divisible by $x\pf t$.

\end{lem}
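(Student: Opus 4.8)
The plan is to reduce divisibility by $x\pf t$ to ordinary divisibility by the \emph{linear} factor $x-\overline{t}$, and then to invoke a formal factor theorem. The crucial input is the remark already recorded above, namely $t\pf\overline{s}=(t-s)(1+\text{higher degree terms})$. Substituting $t\mapsto x$ and $s\mapsto\overline{t}$, so that $\overline{s}=\overline{\overline{t}}=t$ by involutivity of the inverse, this reads
\[
x\pf t=(x-\overline{t})\,v(x,t),\qquad v(x,t)=1+(\text{higher degree terms}).
\]
Since the term of $v$ of $x$-degree zero is $1+(\text{terms of positive degree in }t)$, the series $v(x,t)$ is a unit in the relevant power series ring. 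Consequently $x\pf t$ and $x-\overline{t}$ generate the same ideal, and it suffices to prove that $f(x)$ is divisible by $x-\overline{t}$.

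For the latter I would first fix the ambient ring. Writing $R$ for the coefficient ring (namely $\L$, or $\L[[\b]]$, or its further extension by $\Lambda(\y)$), I regard $f(x)$ as an element of $R[[t]][[x]]$ and $\overline{t}$ as an element of $R[[t]]$. The key observation is that $\overline{t}=-t+(\text{higher order in }t)$ is topologically nilpotent for the $t$-adic topology on $R[[t]]$. This is exactly what legitimizes the substitution $x=y+\overline{t}$: expanding $f(y+\overline{t})=\sum_{m\ge0}e_m\,y^m$, each coefficient $e_m$ is a convergent sum because the powers $\overline{t}^{\,k}$ tend to $0$.

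Now the hypothesis enters directly: the constant term is $e_0=f(\overline{t})=0$. Hence $f(y+\overline{t})=y\sum_{m\ge1}e_m\,y^{m-1}$, and re-substituting $y=x-\overline{t}$ (again legitimate by topological nilpotency) yields $f(x)=(x-\overline{t})\,g(x)$ for a well-defined $g(x)\in R[[t]][[x]]$. Combining with the factorization of the first paragraph gives $f(x)=(x\pf t)\,\bigl(v(x,t)^{-1}g(x)\bigr)$, which is the asserted divisibility.

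I omit the routine calculations, but the point that requires genuine care is topological: one must specify the completion in which all of these formal substitutions and inversions take place, and verify that $\overline{t}$ really is topologically nilpotent there, so that $x=y+\overline{t}$ defines a continuous automorphism of the power series ring. Once that bookkeeping is fixed, both the unit property of $v$ and the factor theorem are purely formal, and I expect no further obstacle.
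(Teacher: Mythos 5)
Your proposal is correct and follows essentially the same route as the paper's own proof: reduce divisibility by $x\pf t$ to divisibility by the linear factor $x-\overline{t}$ via the unit relating them, and apply the formal factor theorem to conclude. You simply make explicit the two points the paper leaves implicit --- the substitution $x=y+\overline{t}$ justifying the factor theorem (using that $\overline{t}$ has no constant term), and the unit property of the correction factor --- which is careful bookkeeping but not a different argument.
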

\begin{proof}

By the assumption $f(x)$ is divisible by $x-\overline{t}$.
Since  $x-\overline{t}$ is equal to
$(x\pf t)(1+\text{ higher terms})$,
the result follows.

\end{proof}

Using the lemma above we can show  that $\Delta(t;\y)-1$ is divisible by $t\pf t$.
Continuing this kind of argument we can define
$\widehat{p}^\L_{k}(\y|\b)$
by the following identity.


$$\Delta(t;\y)=\sum_{k=0}^\infty [[t|\b]]^k\;\widehat{p}^\L_{k}(\y|\:\b).$$

Define
$$\Gamma_{\L}:=\text{ the }\L[[\b]]\text{-subalgebra of } 
\L[[\b]]\hat{\otimes} \Lambda(\y)
\text{ generated by } \widehat{p}^\L_{k}(\y|\:\b),\; k=0,1,2,\ldots\:,$$
$$\Gamma_{\L}^{+}:=\text{ the }\L[[\b]]\text{-subalgebra of }
 \L[[\b]]\hat{\otimes} \Lambda(\y)
\text{ generated by } \widehat{q}^\L_{k}(\y|\:\b),\; k=0,1,2,\ldots\:.$$

For a partition $\lambda=(\lambda_1,\ldots,\lambda_r)$ of length $r \leq n$, we define

$$\widehat{p}^{[\lambda]}(\y|\:\b):=\prod_{i=1}^{r} \widehat{p}^\L_{\lambda_i}(\y|\:\b)
\;\;\text{ and }\;\;
\widehat{q}^{[\lambda]}(\y|\:\b):=\prod_{i=1}^{r} \widehat{q}^\L_{\lambda_i}(\y|\:\b).$$

We define the subspaces of $\Gamma_{\L}$ and $\Gamma_{\L}^{+}$ by using the set 
$\mathcal{P}_n$ of
partitions of length $\leq n$ as follows.

\begin{center}
$
\begin{array}{lcl}
\Gamma_{\L}^{(n)}(\y)&:=&\displaystyle\sum_{\lambda\in \mathcal{P}_n} 
\L[[\b]]\: \widehat{p}^{[\lambda]}(\y|\:\b)\subset \Gamma_{\L}, \text{ and }\\[0.5cm] 
\Gamma_{\L}^{(n),+}(\y)&:=&\displaystyle\sum_{\lambda\in \mathcal{P}_n} 
\L[[\b]]\: \widehat{q}^{[\lambda]}(\y|\:\b)\subset \Gamma_{\L}^{+}.
\end{array}
$
\end{center}

\subsection{Kernel}

We  consider the iterated product of $\Delta(x_i;\y)$'s, and their limit.

$$\Delta(x_1,\ldots,x_n;\y):=\prod_{i=1}^{n} \Delta(x_i;\y),\;\;
\Delta(\x;\y):=\lim_{\substack{\longleftarrow\\n}} \Delta(x_1,\ldots,x_n;\y).$$

\begin{prop}($\L$-supersymmetricity)
\begin{itemize}
\item[(1)] 
$\Delta(x_1,\ldots,x_n;\y)$ is $\L$-supersymmetric in variables $(x_1,\ldots,x_n)$.

\item[(2)]
$\Delta(x_1,\ldots,x_n;\y)\in \Gamma^\L_{+}(\x_n) \otimes_{\L[[\b]]} \Gamma_{\L}^{(n)}(\y)$
and
$\Delta(x_1,\ldots,x_n;\y)\in \Gamma^\L_{}(\x_n) \otimes_{\L[[\b]]} \Gamma_{\L}^{(n),+}(\y)$.

\end{itemize}

\end{prop}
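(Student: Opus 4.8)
The plan is to dispatch part~(1) by a one-variable cancellation, and then to obtain part~(2) by multiplying out the two one-row expansions of $\Delta(t;\y)$ and matching the result against the basis theorem.

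For part~(1), symmetry in $x_1,\dots,x_n$ is automatic since $\Delta(x_1,\dots,x_n;\y)=\prod_{i=1}^n\Delta(x_i;\y)$ is a product of single-variable factors; by symmetry it then suffices to check condition~(2) of $\L$-supersymmetricity on the first two slots. I would compute $\Delta(t;\y)\,\Delta(\overline t;\y)$ directly. Because the formal inverse is an involution, $\overline{\overline t}=t$, so $\Delta(\overline t;\y)=\prod_{j}\frac{1-t\,y_j}{1-\overline t\,y_j}$ and the two infinite products telescope to $\Delta(t;\y)\,\Delta(\overline t;\y)=1$. Hence $\Delta(t,\overline t,x_3,\dots,x_n;\y)=\prod_{i\ge 3}\Delta(x_i;\y)$ is independent of $t$, as required.

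For part~(2) I would argue the $x$-side and the $\y$-side separately and then glue. On the $x$-side, since $\overline 0=0$ we have $\Delta(0;\y)=1$, so $\Delta(t,x_2,\dots,x_n;\y)-\Delta(0,x_2,\dots,x_n;\y)=(\Delta(t;\y)-1)\prod_{i\ge 2}\Delta(x_i;\y)$ is divisible by $t\pf t$, because $\Delta(t;\y)-1$ is (as recorded before the definition of $\widehat p^\L_k$). Combined with part~(1), this shows that $\Delta$, read as a series in the $x_i$, lies in $\Gamma^\L_+(\x_n)$, while supersymmetry alone gives $\Delta\in\Gamma^\L(\x_n)$. On the $\y$-side, multiplying out $\Delta(x_i;\y)=\sum_{k}[[x_i|\b]]^k\,\widehat p^\L_k(\y|\b)$ and collecting terms by the partition $\mu$ obtained from $(k_1,\dots,k_n)$ gives $\Delta=\sum_{\mu,\ \ell(\mu)\le n} m_\mu(x|\b)\,\widehat p^{[\mu]}(\y|\b)$, where $m_\mu(x|\b):=\sum_{(k_1,\dots,k_n)\ \text{rearranging } \mu}\prod_i[[x_i|\b]]^{k_i}$ is symmetric in the $x_i$ and every $\widehat p^{[\mu]}\in\Gamma_\L^{(n)}(\y)$; using instead the $[x_i|\b]^k,\widehat q^\L_k$ expansion lands all $\y$-elements in $\Gamma_\L^{(n),+}(\y)$.

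To glue, I would use that the factorial form of the basis theorem makes $\Gamma^\L_+(\x_n)$ a free $\L[[\b]]$-module, a direct summand of the space of symmetric series in $x$, with basis $\{Q^\L_\lambda(\x_n|\b)\}_{\lambda\in\mathcal{SP}_n}$; let $\{\phi_\lambda\}$ be the associated coordinate functionals. Applying $\phi_\lambda$ to $\Delta=\sum_\mu m_\mu(x|\b)\,\widehat p^{[\mu]}(\y|\b)$ gives the $Q$-coefficient $c_\lambda(\y)=\sum_\mu\phi_\lambda(m_\mu)\,\widehat p^{[\mu]}(\y|\b)$, which lies in $\Gamma_\L^{(n)}(\y)$ since each $\phi_\lambda(m_\mu)\in\L[[\b]]$ and each $\widehat p^{[\mu]}\in\Gamma_\L^{(n)}(\y)$. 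As the $x$-side already yields $\Delta=\sum_{\lambda\in\mathcal{SP}_n}Q^\L_\lambda(\x_n|\b)\,c_\lambda(\y)$, we conclude $\Delta\in\Gamma^\L_+(\x_n)\otimes_{\L[[\b]]}\Gamma_\L^{(n)}(\y)$; the second membership is obtained identically with $P^\L_\lambda$ and $\widehat q^\L_k$. The hard part will be this gluing step: one must ensure the coordinate functionals $\phi_\lambda$ are continuous and that the infinite sum for $c_\lambda$ converges inside the closed submodule $\Gamma_\L^{(n)}(\y)$, and more generally that the completed tensor product over $\L[[\b]]$ behaves like the algebraic one, so that simultaneous membership in $\Gamma^\L_+(\x_n)\,\hat\otimes\,\bigl(\L[[\b]]\hat\otimes\Lambda(\y)\bigr)$ and in $\bigl(\text{all symmetric series in } x\bigr)\hat\otimes\Gamma_\L^{(n)}(\y)$ forces membership in the tensor product of the two factors; the infinitely many $y_j$ and the $\b$-adic completion are what make this a genuine point rather than a formality.
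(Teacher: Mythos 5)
Your proposal is correct in every step it makes concrete, and it takes a substantially more thorough route than the paper, whose own proof is a two-line sketch: part (1) is dismissed with ``follows from the definition'' and part (2) with the single remark that $\Delta(t,x_2,\ldots,x_n;\y)-\Delta(0,x_2,\ldots,x_n;\y)$ is divisible by $t\pf t$ (Lemma 3.1). Your explicit cancellation $\Delta(t;\y)\,\Delta(\overline{t};\y)=1$ via $\overline{\overline{t}}=t$ is exactly what the paper's phrase for (1) amounts to, and your $x$-side divisibility step \emph{is} the entirety of the paper's argument for (2). What you add --- the $y$-side expansion $\Delta=\sum_{\mu\in\mathcal{P}_n} m_\mu(x|\b)\,\widehat{p}^{[\mu]}(\y|\:\b)$ and the gluing of the two one-sided memberships into the tensor-product statement --- is precisely the content the paper leaves unaddressed: nothing in its proof explains why $\Delta$ lies in $\Gamma^\L_{+}(\x_n)\otimes_{\L[[\b]]}\Gamma_{\L}^{(n)}(\y)$ rather than merely satisfying the two supersymmetry conditions in the $x$-variables, so your attempt correctly isolates where the real work is. Two caveats on your gluing step, which remains the unfinished part of your argument: first, it quietly requires a factorial basis theorem (that $\{Q^\L_\lambda(x_1,\ldots,x_n|\:\b)\}_{\lambda\in\mathcal{SP}_n}$ is a formal basis over $\L[[\b]]$, with coefficients extended to include $\y$), whereas Theorem 2.1 is stated only at $\b=0$; this does follow by the same $\b$-adic induction the paper invokes for Theorem 3.1, but it must be said. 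Second, your assumption that $\Gamma^\L_{+}(\x_n)$ is a direct summand of the space of all symmetric series --- needed so that the functionals $\phi_\lambda$ can be applied to the individual $m_\mu$, which are symmetric but not $\L$-supersymmetric --- is asserted rather than proved, and is the one step that could fail as stated; a way around it is to extract the coefficients $c_\lambda(\y)$ by evaluating $\Delta$ itself at the specializations $x\mapsto\overline{\b}_\mu$ and using the triangularity of the vanishing property (Prop. 2.3), though the non-unit diagonal values there also require care. These completion and convergence issues, which you honestly flag, are genuine mathematical points; the paper's proof simply does not engage them.
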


Proof)

(1) follows from the definition.

(2) $\Delta(x_1,\ldots,x_n;\y)$ has the property that
$\Delta(t,x_2,\ldots,x_n;\y)-\Delta(0,x_2,\ldots,x_n;\y)$ is divisible by 
$t\pf t$ (see Lemma 3.1).




\vspace{1cm}

We can define stable limit functions
$$Q^\L_\lambda(\x\:|\:\b):=\lim_{\substack{\longleftarrow\\n}} Q^\L_\lambda(x_1,\ldots, x_n|\:\b)
\;\text{
 and }
P^\L_\lambda(\x\:|\:\b):=\lim_{\substack{\longleftarrow\\n}} 
P^\L_\lambda(x_1,\ldots, x_{2n}\:|\:\b)
,\;(\text{even limit}).$$

We also define
$$\Gamma^{\L}_{}:=\lim_{\substack{\longleftarrow\\n}} 
\Gamma^{\L}(\x_n)\otimes_{\L} \L[[\b]]
\;\text { and }\;
\Gamma^{\L}_{+}:=\lim_{\substack{\longleftarrow\\n}} 
\Gamma^{\L}_{+}(\x_{n})\otimes_{\L} \L[[\b]].
$$

\begin{Def}(dual universal factorial Schur $P,Q$-functions)\\
We define $\widehat{p}^{\L}_\lambda(\y|\:\b)$ and $\widehat{q}^{\L}_\lambda(\y|\:\b)$
by the following identities. (Here $\mathcal{SP}$ is the set of all strict partitions.)
\begin{itemize}
\item[(1)]
$$\displaystyle
\Delta(\x;\y)=\prod_{i,j\geq 1}\frac{1-\overline{x}_i y_j}{1-x_i y_j}
=\sum_{\lambda\in \mathcal{SP}}
Q^\L_\lambda(\x\:|\:\b)\: \widehat{p}^\L_\lambda(\y|\:\b)
\hspace{1cm}
\text{(Cauchy identity)},
$$
\item[(2)]
$$\displaystyle
\Delta(\x;\y)=
\prod_{i,j\geq 1}\frac{1-\overline{x}_i y_j}{1-x_i y_j}
=\sum_{\lambda\in \mathcal{SP}}
P^\L_\lambda(\x|\:\b)\: \widehat{q}^\L_\lambda(\y|\:\b)
\hspace{1cm}
\text{(Cauchy identity)}.
$$
\end{itemize}
\end{Def}
\noindent

N.B. We can also define (type $A$) universal factorial Schur function 
$s^\L_\lambda(\x||\b)$ and its dual
$\widehat{s}^\L_\lambda(\y||\b)$  (see Appendix).

$\widehat{p}_{\lambda}^\L(\y|\b)$ and 
$\widehat{q}_{\lambda}^\L(\y|\b)$ are
formal series.
But
$\widehat{p}^\L_\lambda(\y)=\widehat{p}_{\lambda}^\L(\y|0)$ and 
$\widehat{q}^\L_\lambda(\y)=\widehat{q}_{\lambda}^\L(\y|0)$ are symmetric functions of finite degree
and 
top terms are usual Schur $P,Q$-functions $P_\lambda(\y)$ and $Q_\lambda(\y)$.

For example
$$\widehat{p}^\L_1(\y)=P_1(\y),\;\widehat{p}^\L_2(\y)=P_2(\y)+a_{1,1}h_1(\y),$$
$$\widehat{p}^\L_3(\y)=P_3(\y)+a_{1,1}h_2(\y)-2 a_{1,1} h_1^2(\y)+(a_{1,1}^2-a_{1,2})h_1(\y).$$

$$\widehat{q}^\L_1(\y)=Q_1(\y),\:
\widehat{q}^\L_2(\y)=Q_2(\y)-a_{1,1}h_1(\y),$$
$$\widehat{q}^\L_3(\y)=
Q_3(\y)+2 a_{1,1} h_2(\y)-3 a_{1,1} h_1(\y)^2+a_{1,1}^2 h_1(\y).$$

\begin{thm}(Basis theorem)
\begin{itemize}
\item[(0)] For a strict partition $\lambda$ of length $r$,
$\widehat{p}^\L_\lambda(\y|\b)\in \Gamma_{\L}^{(r)}(\y)$ and
$\widehat{q}^\L_\lambda(\y|\b)\in \Gamma_{\L}^{(r),+}(\y)$.

\item[(1)]
$\left\{\widehat{p}^\L_\lambda(\y|\b)\right\}_{\lambda\in \mathcal{SP}}$
are linearly independent and form an $\L[[\b]]$-basis
of $\Gamma_\L$.

\item[(2)]
$\left\{\widehat{q}^\L_\lambda(\y|\b)\right\}_{\lambda\in \mathcal{SP}}$ 
are linearly independent and form an $\L[[\b]]$-basis
of $\Gamma_\L^{+}$.

\end{itemize}
\end{thm}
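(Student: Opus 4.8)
The plan is to reduce both statements to the cohomological basis theorem (Theorem 2.1) via the finite-variable Cauchy expansion, and then to reduce the resulting comparison to the classical basis theorem for Schur $P,Q$-functions by a leading-term argument. Statement (2) for $\widehat{q}^\L_\lambda$ is obtained from the argument for (1) by exchanging the two Cauchy identities, i.e. replacing the pair $(Q^\L_\lambda,\widehat{p}^\L_\lambda)$ by $(P^\L_\lambda,\widehat{q}^\L_\lambda)$ and $\Gamma_\L$ by $\Gamma_\L^{+}$ throughout, so I will describe (0) and (1) in detail and only indicate this symmetry at the end.

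For the membership (0), I would begin with the finite-variable kernel. By Proposition 3.1(2) we have $\Delta(x_1,\ldots,x_n;\y)\in\Gamma^\L_{+}(\x_n)\otimes_{\L[[\b]]}\Gamma_\L^{(n)}(\y)$, while by Theorem 2.1(2) the family $\{Q^\L_\lambda(x_1,\ldots,x_n|\:\b)\}_{\lambda\in\mathcal{SP}_n}$ is an $\L[[\b]]$-basis of $\Gamma^\L_{+}(\x_n)$. Expanding $\Delta(x_1,\ldots,x_n;\y)$ in this basis produces coefficients lying in $\Gamma_\L^{(n)}(\y)$, and by comparison with the defining infinite-variable Cauchy identity — the stable limit of the finite ones, using the stability of the kernel and of the $Q^\L_\lambda$ recorded before the Definition — these coefficients stabilize to $\widehat{p}^\L_\lambda(\y|\:\b)$. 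Hence $\widehat{p}^\L_\lambda\in\Gamma_\L^{(n)}(\y)$ for each $n\geq\ell(\lambda)$. To sharpen this to $\widehat{p}^\L_\lambda\in\Gamma_\L^{(r)}(\y)$ with $r=\ell(\lambda)$, I would run the same expansion with exactly $n=r$ variables, in which $\lambda$ is an index of maximal length in $\mathcal{SP}_r$, and invoke stability to identify its coefficient with $\widehat{p}^\L_\lambda$.

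For statement (1) I would prove linear independence and spanning simultaneously, via a single change-of-basis argument. By definition $\Gamma_\L$ is the $\L[[\b]]$-algebra generated by the $\widehat{p}^\L_k$, hence is spanned over $\L[[\b]]$ by the products $\widehat{p}^{[\mu]}(\y|\:\b)$ as $\mu$ runs over all partitions. I would then compare $\{\widehat{p}^\L_\lambda\}_{\lambda\in\mathcal{SP}}$ with this spanning set. Modulo the augmentation ideal of $\L[[\b]]$ and in top $\y$-degree, $\widehat{p}^\L_\lambda$ specializes to the classical Schur $P$-function $P_\lambda(\y)$ and $\widehat{p}^{[\mu]}$ to the product $\prod_i P_{\mu_i}(\y)$; the transition between $\{P_\lambda\}_{\lambda\text{ strict}}$ and these products is integral and unitriangular with respect to the dominance and degree order, by the classical basis theorem for Schur $P$-functions. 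Since $\L[[\b]]$ is complete with respect to the grading given by $\deg_h$ and $\deg(b_i)=\deg(y_i)=1$, a transition which is triangular with unit diagonal in each graded piece is invertible; therefore $\{\widehat{p}^\L_\lambda\}_{\lambda\in\mathcal{SP}}$ is at once linearly independent over $\L[[\b]]$ and a spanning set for $\Gamma_\L$, i.e. an $\L[[\b]]$-basis.

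The step I expect to be the main obstacle is organizing this leading-term argument over the completed coefficient ring $\L[[\b]]$: one must fix a filtration mixing the homological degree of $\L$, the $\b$-adic order, and the $\y$-degree for which $\widehat{p}^\L_\lambda$ genuinely has leading term $P_\lambda(\y)$ in general — not merely in the low-degree examples displayed before the theorem — and for which the inverse limit and the completion are compatible with the extraction of leading terms. Establishing that leading term in general, equivalently that the leading part of the Cauchy identity is the classical Cauchy identity for Schur $P,Q$-functions, and then checking that the triangularity persists graded-piece-by-graded-piece, is where the real work lies; the sharp length bound in (0) requires similar care in controlling which strict partitions can occur when only $r$ variables are used.
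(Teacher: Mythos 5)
Your part (0) is essentially the paper's own argument: the paper derives (0) directly from Prop.~3.1(2), and your elaboration (expand the finite-variable kernel in the basis supplied by the cohomological basis theorem, then identify the coefficients with $\widehat{p}^{\L}_\lambda(\y|\b)$ by stability) is the natural way to fill in that one-line proof. For (1) and (2), however, the paper's stated route differs from yours: it invokes ``the induction argument with regard to the variables $\b$'', whereas you propose an associated-graded/leading-term reduction to the classical basis theorem for Schur $P,Q$-functions. Your linear-independence half can indeed be completed along the lines you sketch: given a relation, reduce modulo $(\b)$, use homogeneity to extract the top $\y$-degree part and obtain a classical relation among the $P_\lambda(\y)$, conclude all coefficients lie in $(\b)$, and iterate using $\b$-adic separatedness of $\L[[\b]]$.

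The genuine gap is in the spanning half, which is the real content of (1) and (2). You present the comparison of $\{\widehat{p}^{[\mu]}(\y|\b)\}_{\mu\in\mathcal{P}}$ with $\{\widehat{p}^{\L}_\lambda(\y|\b)\}_{\lambda\in\mathcal{SP}}$ as inverting a transition that is ``triangular with unit diagonal in each graded piece''. But these two families are indexed by different sets, and the products $\widehat{p}^{[\mu]}$ are not linearly independent (already classically $P_1(\y)^2=P_2(\y)$, equivalently $q_1^2=2q_2$), so there is no square transition matrix and ``invertibility'' has no direct meaning. What must actually be proved is that every product $\widehat{p}^{[\mu]}$ is a (generally infinite, $\b$-adically convergent) $\L[[\b]]$-combination of the $\widehat{p}^{\L}_\lambda$, say by successive approximation: subtract $\sum_\lambda K_{\lambda\mu}\,\widehat{p}^{\L}_\lambda$ with $K_{\lambda\mu}$ the classical integral triangular coefficients, and iterate on the remainder. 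Two steps of that iteration are missing from your sketch: (i) the remainder a priori lies only in $\Gamma_\L\cap I\cdot\bigl(\L[[\b]]\hat{\otimes}\Lambda(\y)\bigr)$, where $I$ is the augmentation ideal, and one must show it lies in $I\cdot\Gamma_\L$ (or pass to a completion of $\Gamma_\L$ in which the iteration can run; note that $\Gamma_\L$ as defined consists of finite combinations of products, so it is not visibly closed under the limits the iteration produces); and (ii) the resulting coefficient series must converge in $\L[[\b]]$, which is complete only in the $\b$-direction and merely graded in the $\L$-direction, so one must use $\deg_h$-homogeneity to check that corrections increase $\b$-adic order rather than $a_{i,j}$-degree. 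You explicitly defer exactly these points (``where the real work lies''); since linear independence is the easy half, deferring them leaves the theorem essentially unproved. A smaller issue of the same kind: for $\widehat{q}^{\L}_\lambda$ with $r$ odd, ``run the expansion with exactly $r$ variables'' conflicts with the fact that $P^{\L}$ is stable only through even numbers of variables, which is precisely why the paper takes even limits.
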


\begin{proof}
(0) is a consequence of Prop. 3.1 (2).
(1) and (2) can be proved by the induction argument with regard to the
variables $\b$. 

\end{proof}

\subsection{Hopf algebra structure}

The ring of symmetric functions $\Lambda(\y)$ has a structure of Hopf algebra.
We can consider Hopf algebra structure on $\Gamma_\L$ by scalar extension and restriction.
Then we can consider $\Gamma^{\L}_{+}$ 
(resp. $\Gamma^{\L}_{}$)as a dual Hopf algebra
of $\Gamma_{\L}$
(resp. $\Gamma_{\L}^{+}$)
 over $\L[[\b]]$.
We will write $\phi$ for the coproduct maps.

\begin{prop}(duality)\\
\begin{itemize}
\item[(1)]

If \;$Q^\L_\lambda(\x|\b) Q^\L_\mu(\x|\b)=\displaystyle
\sum_{\nu\in \mathcal{SP}} c_{\lambda,\mu}^\nu(\b)\; Q^\L_\nu(\x|\b)$, then
$$\phi(\widehat{p}^\L_\nu(\y|\b))=
\sum_{\lambda,\mu\in \mathcal{SP}}c_{\lambda,\mu}^\nu(\b) \;
\widehat{p}^\L_\lambda(\y|\b)\otimes \widehat{p}^\L_\mu(\y|\b).$$

\item[(2)]
If \;$\widehat{p}^\L_\lambda(\y|\b) \widehat{p}^\L_\mu(\y|\b)=\displaystyle
\sum_{\nu\in \mathcal{SP}} 
\widehat{c}_{\lambda,\mu}^\nu(\b)\; \widehat{p}^\L_\nu(\y|\b)$, then
$$\phi(Q^\L_\nu(\x|\b))=
\sum_{\lambda,\mu\in \mathcal{SP}}
\widehat{c}_{\lambda,\mu}^\nu(\b) \;
Q^\L_\lambda(\x|\b)\otimes Q^\L_\mu(\x|\b).$$

\item[(3)]
If \;$P^\L_\lambda(\x|\b) P^\L_\mu(\x|\b)=\displaystyle
\sum_{\nu\in \mathcal{SP}} d_{\lambda,\mu}^\nu(\b)\; P^\L_\nu(\x|\b)$, then
$$\phi(\widehat{q}^\L_\nu(\y|\b))=
\sum_{\lambda,\mu\in \mathcal{SP}} d_{\lambda,\mu}^\nu(\b) \;
\widehat{q}^\L_\lambda(\y|\b)\otimes \widehat{q}^\L_\mu(\y|\b).$$

\item[(4)]

If \;$\widehat{q}^\L_\lambda(\y|\b) \widehat{q}^\L_\mu(\y|\b)=\displaystyle
\sum_{\nu\in \mathcal{SP}} 
\widehat{d}_{\lambda,\mu}^\nu(\b)\; \widehat{q}^\L_\nu(\y|\b)$, then
$$\phi(P^\L_\nu(\x|\b))=
\sum_{\lambda,\mu\in \mathcal{SP}}
\widehat{d}_{\lambda,\mu}^\nu(\b) \;
P^\L_\lambda(\x|\b)\otimes P^\L_\mu(\x|\b).$$

\end{itemize}
\end{prop}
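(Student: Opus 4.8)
The plan is to deduce all four assertions from a single principle: the two Cauchy identities exhibit $\{Q^{\L}_\lambda\}_{\lambda\in\mathcal{SP}}$ and $\{\widehat{p}^{\L}_\lambda\}_{\lambda\in\mathcal{SP}}$ (respectively $\{P^{\L}_\lambda\}$ and $\{\widehat{q}^{\L}_\lambda\}$) as dual bases for the pairing that realizes $\Gamma^{\L}_{+}$ as the dual Hopf algebra of $\Gamma_{\L}$ (respectively $\Gamma^{\L}$ as the dual of $\Gamma_{\L}^{+}$). Writing $\langle\,\cdot\,,\,\cdot\,\rangle\colon\Gamma^{\L}_{+}\times\Gamma_{\L}\to\L[[\b]]$ for the first pairing, reading the Cauchy identity $\Delta(\x;\y)=\sum_\lambda Q^{\L}_\lambda(\x|\b)\,\widehat{p}^{\L}_\lambda(\y|\b)$ as the statement that $\Delta$ is the reproducing kernel of this pairing gives exactly $\langle Q^{\L}_\lambda,\widehat{p}^{\L}_\mu\rangle=\delta_{\lambda\mu}$; symmetrically, the second Cauchy identity gives $\langle P^{\L}_\lambda,\widehat{q}^{\L}_\mu\rangle=\delta_{\lambda\mu}$ for the pairing on $\Gamma^{\L}\times\Gamma_{\L}^{+}$. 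First I would record these dual-basis relations as the single nontrivial input.

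Next I would invoke the defining adjunction of a Hopf pairing. For dual Hopf algebras $H$ and $H^{*}$ over $\L[[\b]]$ with pairing $\langle\,\cdot\,,\,\cdot\,\rangle$ and coproducts both denoted $\phi$, one has, for all $a,b\in H$ and $f\in H^{*}$,
$$\langle ab,\,f\rangle=\langle a\otimes b,\,\phi(f)\rangle,\qquad\langle a,\,fg\rangle=\langle\phi(a),\,f\otimes g\rangle,$$
where the pairing is extended to the tensor square by $\langle a\otimes b,\,f\otimes g\rangle=\langle a,f\rangle\langle b,g\rangle$. Since the bases are dual, every element on the appropriate side is recovered by its pairings against the opposite basis, so the multiplication constants of one algebra are forced to equal the comultiplication constants of the other.

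With this in place each of (1)--(4) is the same one-line computation. For (1), expanding $\phi(\widehat{p}^{\L}_\nu)$ in the basis $\{\widehat{p}^{\L}_\lambda\otimes\widehat{p}^{\L}_\mu\}$ of $\Gamma_{\L}\otimes\Gamma_{\L}$ and pairing against $Q^{\L}_\lambda\otimes Q^{\L}_\mu$ yields
\begin{align*}
\phi(\widehat{p}^{\L}_\nu)
&=\sum_{\lambda,\mu}\langle Q^{\L}_\lambda\otimes Q^{\L}_\mu,\;\phi(\widehat{p}^{\L}_\nu)\rangle\,\widehat{p}^{\L}_\lambda\otimes\widehat{p}^{\L}_\mu\\
&=\sum_{\lambda,\mu}\langle Q^{\L}_\lambda\,Q^{\L}_\mu,\;\widehat{p}^{\L}_\nu\rangle\,\widehat{p}^{\L}_\lambda\otimes\widehat{p}^{\L}_\mu
=\sum_{\lambda,\mu}c_{\lambda,\mu}^{\nu}(\b)\,\widehat{p}^{\L}_\lambda\otimes\widehat{p}^{\L}_\mu,
\end{align*}
where the last equality substitutes the product rule $Q^{\L}_\lambda Q^{\L}_\mu=\sum_{\kappa} c_{\lambda,\mu}^{\kappa}(\b)\,Q^{\L}_{\kappa}$ and applies $\langle Q^{\L}_{\kappa},\widehat{p}^{\L}_\nu\rangle=\delta_{\kappa\nu}$. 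Statement (2) is the identical argument with the two dual algebras interchanged (the product on the $\widehat{p}$-side now computing the coproduct on the $Q$-side, via the second adjunction formula), and (3), (4) are these same two computations carried out for the second pairing $\langle P^{\L}_\lambda,\widehat{q}^{\L}_\mu\rangle=\delta_{\lambda\mu}$.

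The substantive point, which I would isolate as a preliminary lemma rather than grind through here, is the verification that the Cauchy kernel really defines a nondegenerate Hopf pairing over $\L[[\b]]$: one must check that the coproducts on the $\x$- and $\y$-sides are mutually adjoint through $\Delta(\x;\y)$. The crux is the multiplicativity of the kernel under splitting of variables, $\Delta(\x;\y\sqcup\y')=\Delta(\x;\y)\,\Delta(\x;\y')$ and its $\x$-analogue, which forces the product/coproduct adjunction; nondegeneracy and the dual-basis property then come from the basis theorems. The only remaining care is analytic: all sums over $\mathcal{SP}$ and all interchanges of $\phi$ with $\langle\,\cdot\,,\,\cdot\,\rangle$ must be carried out in the inverse-limit completions and completed tensor products $\hat{\otimes}$ in which $\Gamma_{\L}$, $\Gamma^{\L}_{+}$ and their counterparts live. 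Granting this, the identities above are forced, and the four formulas follow.
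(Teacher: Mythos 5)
Your proposal is correct and is essentially the paper's own argument: the paper disposes of all four statements with the single remark that they are ``formal consequences of the Cauchy identity (cf.\ Molev)'', and what you have written out --- dual bases from the reproducing kernel, the product/coproduct adjunction of the Hopf pairing, and the one-line expansion of $\phi$ in the dual basis --- is precisely the standard argument that remark refers to. You have simply made explicit (including the completion issues and the multiplicativity of the kernel) what the paper leaves implicit.
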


\begin{proof}
These are formal consequences of the Cauchy identity (cf.\cite{Mol}).
\end{proof}

\section{Geometric background}

We will briefly explain the geometric meaning of the polynomials we defined.
The details will be explained in \cite{Nak-Nar}.

In \cite{H-H-H} the equivariant (generalized) cohomology ring 
(cf.\cite{May},\cite{Ada})
of a $T$-space $X$
is characterized as a GKM-space.
The vanishing property
(Prop.
\ref{vanishing}) means
that $P^\L_\lambda(\x|\:\b)$ 
has the property of free topological module generator $x_v$
(\cite{H-H-H} Prop. 4.1).
 
For the case of $K$-theory, the functions $P^\L_\lambda(\x|\b)$ and 
$Q^\L_\lambda(\x|\b)$ specialize to
$GP_\lambda(\x|\b)$ and $GQ_\lambda(\x|\b)$, $K$-theory factorial $P$- and $Q$-
functions (\cite{Ike-Nar} Def. 2.1).
These are shown to be the Schubert basis for the torus equivariant $K$-theory
of Lagrangian or orthogonal Grassmannians in \cite{Ike-Nar}. We will discuss
$K$-theory homology Schubert basis in  section 5.

\section{$K$-theory case}

For the notations of root systems and Weyl groups of type $B,C,D$, we use the
convention of \cite{Ike-Nar}. 
Formal group law for $K$-theory is $F(u,v)=u+v+\beta u v$ ($\beta$ is 
an invertible element).
Using divided difference operators acting on the coefficient ring
$\Z[\b, \overline{\b}]$
 and the space of $K$-supersymmetric functions in $\x$-variables,
we can recursively compute the dual factorial $P,Q$-functions
 $\widehat{p}_\lambda^K(\y|\b)$ and $\widehat{q}_\lambda^K(\y|\b)$ for $K$-theory.
We will mainly explain this for the case of type $C$.
The root system has infinite simple roots $\{\alpha_i\}_{i=0,1,2,,\ldots}$ and
corresponding simple reflections $\{s_i\}_{i=0,1,2,,\ldots}$, which generate
the Weyl group $W(C_\infty)$.
The Coxeter relations are $(s_0 s_1)^4=(s_i s_{i+1})^3=(s_j s_k)^2=1$ for 
$1\leq i$ and $0\leq j<k-1$. 
The elements $e(\alpha_i)\in \Z[\beta][\b,\overline{\b}]$ corresponding to
each simple roots are defined as follows.
$$
e(\alpha_i)=b_{i+1}+_F \bar{b}_i \; (i \geq 1)
\text{ and } 
e(\alpha_0)=b_1+_F b_1  .$$


Let $\psi_i(f):=\frac{s_i(f)-f}{e(\alpha_i)}$. 
(This divided difference operator was used in \cite{Kir-Nar} to study Grothendieck
polynomial and its dual.)
Actually $\psi_i=\pi_i+\beta$
, where $\pi_i$ is the divided difference operator defined in \cite{Ike-Nar}.
We assume the following property of $K$-theoretic factorial
 Schur $Q$-function $Q^K_\lambda(\x|\b)=GQ_\lambda(\x|\b)$.
(see \cite{Ike-Nar} Theorem 6.1).
\begin{center}
$\psi_i(Q^K_\lambda(\x|\b))=0$ if $s_i\lambda\geq \lambda$ and
$\psi_i(Q^K_\lambda(\x|\b))=Q^K_{s_i\lambda}(\x|\b)+\beta Q^K_\lambda(\x|\b)$ 
if $s_i\lambda< \lambda$.
\end{center}
Here  Weyl group $W(C_\infty)$ naturally acts on the set of strict partitions $\mathcal{SP}$
(Actually $\mathcal{SP}$ can be considered as the set of  minimal coset 
representatives $W(C_\infty)/W(A_\infty)$, where $W(A_\infty)$ is the subgroup
genereted by $s_i (i\geq 1)$).
We write this action by $s_i\lambda$ for $\lambda\in \mathcal{SP}$.
We also define 
$\widehat{\psi}_i:=-s_i \psi_i$, because this will be suitable
for homology
as we see below:

\begin{prop} We put $\Delta=\Delta(\x;\y)$. We have the following formulas:
$$\widehat{\psi}_i\left(\frac{\widehat{p}^K_\lambda(\y|\b)}{\Delta}\right)
=
\left\{
\begin{array}{ccl}
\beta \:\frac{\widehat{p}^K_\lambda(\y|\b)}{\Delta}&&s_i\lambda<\lambda,\\[0.2cm]
0&&s_i\lambda=\lambda,\\[0.2cm]
\frac{\widehat{p}^K_{s_i\lambda}(\y|\b)}{\Delta}&&s_i\lambda>\lambda.\\
\end{array}
\right. 
$$
\end{prop}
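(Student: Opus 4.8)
The plan is to obtain the $\widehat{\psi}_i$-action on the homology side as the transpose of the given $\psi_i$-action on the cohomology side, the bridge being the Cauchy identity $\Delta=\sum_{\mu\in\mathcal{SP}}Q^K_\mu(\x|\b)\,\widehat{p}^K_\mu(\y|\b)$. Together with the basis theorems, this identity exhibits $\{Q^K_\mu\}$ and $\{\widehat{p}^K_\mu\}$ as dual bases; dividing by $\Delta$ gives $1=\sum_\mu Q^K_\mu\cdot(\widehat{p}^K_\mu/\Delta)$, so that $\{\widehat{p}^K_\mu/\Delta\}$ is exactly the family pairing dually to $\{Q^K_\mu\}$. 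I expect $\widehat{\psi}_i=-s_i\psi_i$ to be precisely the adjoint of $\psi_i$ for this pairing; granting that, the proposition is a one-line transpose of the stated recursion for $Q^K_\lambda$.

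First I would record the elementary facts about $\widehat{\psi}_i$. Since $s_i\alpha_i=-\alpha_i$ one has $s_i\bigl(e(\alpha_i)\bigr)=e(-\alpha_i)=\overline{e(\alpha_i)}$, whence $\widehat{\psi}_i(f)=-s_i\psi_i(f)=\bigl(s_i f-f\bigr)/\overline{e(\alpha_i)}$ and, dually, $\psi_i=-s_i\widehat{\psi}_i$. The only other ingredient is the twisted Leibniz rule $\psi_i(fg)=\psi_i(f)\,g+s_i(f)\,\psi_i(g)$, which follows at once from $\psi_i(f)=(s_if-f)/e(\alpha_i)$.

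The heart of the argument is the adjunction. I would apply $\psi_i$, acting on the $\x$- and $\b$-variables, to the Cauchy identity and use the twisted Leibniz rule to obtain a relation
$$\psi_i(\Delta)=\sum_\mu\psi_i\!\bigl(Q^K_\mu\bigr)\,\widehat{p}^K_\mu+\sum_\mu s_i\!\bigl(Q^K_\mu\bigr)\,\psi_i\!\bigl(\widehat{p}^K_\mu\bigr).$$
Substituting $\psi_i=-s_i\widehat{\psi}_i$ on the dual factor, controlling the action of $s_i$ and $\psi_i$ on the kernel $\Delta$, and invoking the linear independence of $\{Q^K_\mu\}$ from the cohomological basis theorem to match coefficients termwise, this collapses to the statement that the matrix of $\widehat{\psi}_i$ in the basis $\{\widehat{p}^K_\mu/\Delta\}$ is the transpose of the matrix of $\psi_i$ in the basis $\{Q^K_\mu\}$.

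Granting the transpose, the conclusion is pure bookkeeping. Write $\psi_i(Q^K_\mu)=\sum_\nu A_{\nu\mu}\,Q^K_\nu$; the hypothesis says that for $s_i\mu<\mu$ the only nonzero entries in column $\mu$ are $A_{s_i\mu,\mu}=1$ and $A_{\mu\mu}=\beta$, while the column vanishes otherwise. Then $\widehat{\psi}_i(\widehat{p}^K_\lambda/\Delta)=\sum_\mu A_{\lambda\mu}\,\widehat{p}^K_\mu/\Delta$, and for fixed $\lambda$ the entry $A_{\lambda\mu}=1$ occurs exactly for $\mu=s_i\lambda$ with $s_i\lambda>\lambda$, whereas $A_{\lambda\mu}=\beta$ occurs exactly for $\mu=\lambda$ with $s_i\lambda<\lambda$; this yields $\widehat{p}^K_{s_i\lambda}/\Delta$ when $s_i\lambda>\lambda$, $\beta\,\widehat{p}^K_\lambda/\Delta$ when $s_i\lambda<\lambda$, and $0$ when $s_i\lambda=\lambda$, which is the assertion. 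The main obstacle is the adjunction step: pinning down how $s_i$ and $\psi_i$ act on the Cauchy kernel $\Delta(\x;\y)$, which couples the $\x$- and $\b$-variables to the $\y$-variables, and verifying that the twist $-s_i$ in $\widehat{\psi}_i$ is exactly the correction making $\psi_i$ self-adjoint up to transpose. The appearance of $\beta$ in the diagonal case is the delicate point: it is forced by the non-additivity of the formal group law (equivalently by $\psi_i=\pi_i+\beta$), so the kernel computation must track the factors $e(\alpha_i)$ and $\overline{e(\alpha_i)}$ rather than treat $\psi_i$ as an ordinary additive divided difference.
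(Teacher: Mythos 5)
Your proposal has the same skeleton as the paper's proof --- expand the Cauchy kernel, apply $\psi_i$, use a twisted Leibniz rule, compare coefficients of the linearly independent $Q^K_\mu$'s, and read the answer off as a transpose --- and your final bookkeeping (that the matrix of $\widehat{\psi}_i$ on $\{\widehat{p}^K_\mu/\Delta\}$ is the transpose of the matrix of $\psi_i$ on $\{Q^K_\mu\}$) is correct. However, the step you yourself flag as ``the main obstacle,'' namely the adjunction, \emph{is} the entire content of the proposition, and your proposal leaves it as an assertion (``I expect,'' ``granting that''). Moreover, your two technical choices make that step genuinely harder than it needs to be: (i) you apply $\psi_i$ to $\Delta=\sum_\mu Q^K_\mu\,\widehat{p}^K_\mu$ itself, so you must control $\psi_i(\Delta)$ and $s_i(\Delta)$, and your intermediate identity is phrased for $\widehat{p}^K_\mu$ rather than for $\widehat{p}^K_\mu/\Delta$; (ii) your Leibniz rule $\psi_i(fg)=\psi_i(f)g+s_i(f)\psi_i(g)$ puts the twist on the first factor, so the second sum contains $s_i(Q^K_\mu)$, which is \emph{not} a basis element --- before you can match coefficients you would still have to substitute $s_i(Q^K_\mu)=Q^K_\mu+e(\alpha_i)\psi_i(Q^K_\mu)$ and absorb the resulting extra terms.

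The paper closes the gap with two small moves that eliminate both problems at once. First divide by $\Delta$ \emph{before} differentiating, so the identity reads $1=\sum_\mu Q^K_\mu\,(\widehat{p}^K_\mu/\Delta)$; applying $\psi_i$ then gives $\psi_i(1)=0$ on the left, and one never needs to know how $s_i$ or $\psi_i$ acts on the kernel $\Delta$: the quantities $s_i(\widehat{p}^K_\mu/\Delta)$ and $\psi_i(\widehat{p}^K_\mu/\Delta)$ are just unknowns to be solved for. Second, use the Leibniz rule in the form $\psi_i(fg)=\psi_i(f)\,s_i(g)+f\,\psi_i(g)$, which keeps the basis elements untwisted:
$$0=\sum_\mu\psi_i\bigl(Q^K_\mu\bigr)\,s_i\!\left(\frac{\widehat{p}^K_\mu}{\Delta}\right)+\sum_\mu Q^K_\mu\,\psi_i\!\left(\frac{\widehat{p}^K_\mu}{\Delta}\right).$$
Inserting the assumed recursion for $\psi_i(Q^K_\mu)$ and comparing coefficients of $Q^K_\lambda$ yields, in the three cases $s_i\lambda<\lambda$, $s_i\lambda=\lambda$, $s_i\lambda>\lambda$,
$$0=\beta\,s_i\!\left(\frac{\widehat{p}^K_\lambda}{\Delta}\right)+\psi_i\!\left(\frac{\widehat{p}^K_\lambda}{\Delta}\right),\qquad
0=\psi_i\!\left(\frac{\widehat{p}^K_\lambda}{\Delta}\right),\qquad
0=s_i\!\left(\frac{\widehat{p}^K_{s_i\lambda}}{\Delta}\right)+\psi_i\!\left(\frac{\widehat{p}^K_\lambda}{\Delta}\right),$$
and applying $-s_i$ (using $s_i^2=1$ and $s_i(\beta)=\beta$) converts these directly into the three cases of the proposition. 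This short computation is exactly what your ``granting the transpose'' conceals; once it is written out, your argument coincides with the paper's.
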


\begin{proof}

By definition $\Delta=\displaystyle\prod_{i,j\geq 1}\frac{1-\bar{x}_i y_j}{1-x_i y_j}$
and
$\displaystyle
\Delta
=\sum_{\lambda\in \mathcal{SP}}
Q^K_\lambda(\x|\b) \widehat{p}^K_\lambda(\y|\b)
$
.
Therefore we have
$$\displaystyle
1
=\sum_{\lambda\in \mathcal{SP}}
Q^K_\lambda(\x|\b) \frac{\widehat{p}^K_\lambda(\y|\b)}{\Delta}
.$$

By the Leibniz rule  $\psi_i(f g)=\psi_i(f) s_i(g)+f \psi_i(g)$ and 
$\psi_i(h)=0$ if $s_i (h)=h	$,
we have

$$0=\sum_{\lambda} 
\psi_i(Q^K_\lambda(\x|\b)) s_i\left( \frac{\widehat{p}^K_\lambda(\y|\b)}{\Delta}\right)
+
\sum_{\lambda} 
Q^K_\lambda(\x|\b) \psi_i
\left(\frac{\widehat{p}^K_\lambda(\y|\b)}{\Delta}\right).
$$

Comparing the coefficients of $Q^K_\lambda(\x|\b)$,  we get

$$
\begin{array}{ccll}
0&=&s_i\left( \frac{\widehat{p}^K_\lambda(\y|\b)}{\Delta}\right)\beta
+\psi_i(\frac{\widehat{p}^K_\lambda(\y|\b)}{\Delta})
&\text{ if } s_i \lambda < \lambda,\\
0&=&\psi_i(\frac{\widehat{p}^K_\lambda(\y|\b)}{\Delta})
&\text{ if } s_i \lambda = \lambda,\\
0&=&s_i\left( 
\frac{\widehat{p}^K_{s_i\lambda}(\y|\b)}{\Delta}\right)
+\psi_i(\frac{\widehat{p}^K_\lambda(\y|\b)}{\Delta})
&\text{ if } s_i \lambda > \lambda.
\end{array}$$

\end{proof}

\vspace{0.3cm}

Iterating use of this proposition give the formula
$\widehat{\psi}_{w^C(\lambda)}(\frac{1}{\Delta})=
\widehat{p}^K_\lambda(\y|\b)(\frac{1}{\Delta})$,
where $w^C(\lambda)$ is the Weyl group element corresponding to 
$\lambda$ and $\widehat{\psi}_{w^C(\lambda)}$ is the product of operators
corresponding to a reduced expression of $w^C(\lambda)$.
Likewise we can use type $D_\infty$-Weyl group and get
$\widehat{\psi}_{w^D(\lambda)}(\frac{1}{\Delta})=\widehat{q}^K_\lambda(\y|\b)(\frac{1}{\Delta})$.
For type $A$ case,
$\widehat{s}^K_\lambda(\y||\b)$ can also be written by 
$\Delta^A=\displaystyle
\prod_{i,j}\frac{1-\bar{b}_i y_j}{1-x_i y_j}$, i.e.
$\widehat{\psi}_{w^A(\lambda)}(\frac{1}{\Delta^A})=
\widehat{s}^K_\lambda(\y||\b)(\frac{1}{\Delta^A})$.

{\bf Remark 1}:
We can define divided difference operator for the case of generalized cohomology.
(\cite{Bre-Eve,Bre-Eve2})
$$\widehat{\psi}_i (f):=\frac{s_i(f)-f}{e(-\alpha_i)}.$$
We can use this to calculate one row dual universal functions
$\widehat{p}^{\L}_{k}(\y|\b)$ and $\widehat{q}^{\L}_{k}(\y|\b)$ as the same rule.

{\bf Remark 2}:
For the case of usual homology,
the dual factorial Schur $P,Q$-functions are studied in  
\cite{Nar}. 



\subsection{
{\bf Conjectural combinatorial formula} for 
\;$\widehat{p}^K_\lambda(\y)$ 
and 
\;$\widehat{q}^K_\lambda(\y)$.}

\begin{Def}
(Tableaux)

For a strict partition $\lambda=(\lambda_1>\ldots>\lambda_r>0)$,
we define $Tab(\lambda)$ as the set of tableaux of shape
$\lambda$ in alphabet $1'<1<2'<2<\cdots$ with condition 
that each rows and columns are weakly increasing.
Let $Tab'(\lambda)$ be the subset of $Tab(\lambda)$
with the property that for each row the leftmost box contains a primed number.
\end{Def}


Motivated by the construction of the dual stable Grothendieck polynomials
$g_\lambda(\y)$\;$(\lambda\in \mathcal{P})$\;
due to Lam-Pylyavskyy \cite{Lam-Pyl},
we define $gp_\lambda(\y), gq_\lambda(\y)$ as follows.

\begin{Def}
 For a strict partition $\lambda\in \mathcal{SP}$,
we define
$$gp_\lambda(\y):=\displaystyle\sum_{T\in Tab'(\lambda)} \y^T,\;
gq_\lambda(\y):=\displaystyle\sum_{T\in Tab(\lambda)} \y^T.$$

Here we define $\y^T=\displaystyle\prod_{i\in T} y_i^{T_C(i)} \prod_{i'\in T} y_i^{T_R(i')}$, where
$T_C(i)$ is the number of columns containing $i$ in $T$ and
$T_R(i')$ is the number of rows containing $i'$ in $T$.
\end{Def}



\begin{conj}

$gp_\lambda(\y)=\widehat{p}^K_\lambda(\y)|_{\beta=-1}$
and
$gq_\lambda(\y)=\widehat{q}^K_\lambda(\y)|_{\beta=-1}$.

\end{conj}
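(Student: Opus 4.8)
The plan is to reduce the conjecture to a single combinatorial Cauchy identity and then to prove that identity by an explicit weight-preserving bijection. By the Cauchy identity defining $\widehat{p}^K_\lambda$ (part (1) of the definition of the dual functions), specialized to $\b=0$ and $\beta=-1$, we have
$$\prod_{i,j\geq 1}\frac{1-\overline{x}_i y_j}{1-x_i y_j}=\sum_{\lambda\in\mathcal{SP}}Q^K_\lambda(\x)\,\widehat{p}^K_\lambda(\y),$$
everything understood at $\beta=-1$. Since the family $\{Q^K_\lambda(\x)\}_{\lambda\in\mathcal{SP}}$ is linearly independent---it specializes the $\L$-basis $\{Q^\L_\lambda(\x)\}$ of the Basis theorem---this expansion determines the coefficients $\widehat{p}^K_\lambda(\y)$ uniquely. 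It therefore suffices to prove the matching combinatorial identity
$$\sum_{\lambda\in\mathcal{SP}}Q^K_\lambda(\x)\,gp_\lambda(\y)=\prod_{i,j\geq 1}\frac{1-\overline{x}_i y_j}{1-x_i y_j},$$
again at $\beta=-1$; comparing coefficients of $Q^K_\lambda(\x)$ then yields $gp_\lambda(\y)=\widehat{p}^K_\lambda(\y)|_{\beta=-1}$. The statement for $gq_\lambda$ is handled identically, using part (2) of the dual definition, with $P^K_\lambda$ in place of $Q^K_\lambda$ and $Tab(\lambda)$ in place of $Tab'(\lambda)$.

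First I would install a tableau model for the left-hand factor. At $\beta=-1$ and $\b=0$ the $K$-theoretic $Q$-function $Q^K_\lambda(\x)=GQ_\lambda(\x)$ is the generating function of set-valued shifted tableaux of shape $\lambda$, and $P^K_\lambda(\x)=GP_\lambda(\x)$ is the analogous generating function with the primed-diagonal convention; I would take these tableau formulas as the starting combinatorial description of the $\x$-side. Next I would expand the kernel $\prod_{i,j}\frac{1-\overline{x}_i y_j}{1-x_i y_j}$ at $\beta=-1$ as a weighted sum over $K$-theoretic biwords in the pairs $(i,j)$: the geometric factors $\frac{1}{1-x_iy_j}$ contribute repeated pairs, while the numerators $1-\overline{x}_i y_j$ contribute the extra multiplicities characteristic of $K$-theory. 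This turns the right-hand side into a generating function over a combinatorial set whose $\x$- and $\y$-weights must be reorganized to match the two factors on the left.

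The heart of the argument is a shifted $K$-theoretic (Hecke) insertion: a weight-preserving bijection sending each such biword to a pair $(P,T)$, where $P$ is a set-valued shifted tableau recording the $\x$-weight, so that summing over $P$ reproduces $GQ_\lambda(\x)$, and $T$ is a shifted tableau in the alphabet $1'<1<2'<2<\cdots$ recording the $\y$-weight. The delicate constraint is that the $\y$-weight of $T$ must equal exactly $\y^T=\prod_{i\in T}y_i^{T_C(i)}\prod_{i'\in T}y_i^{T_R(i')}$, in which unprimed letters are counted by the columns they occupy and primed letters by the rows they occupy; this unusual convention dictates how the two halves of each biword are read and forces the primed/unprimed bookkeeping of the insertion. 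One must also arrange that the recording tableau $T$ ranges precisely over $Tab'(\lambda)$, with the leftmost box of each row primed, in the $gp$/$GQ$ case, and over $Tab(\lambda)$ in the $gq$/$GP$ case; matching this restriction against the primed-diagonal convention that distinguishes $GP_\lambda$ from $GQ_\lambda$ is what separates the two halves of the conjecture.

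The hard part will be the construction and verification of this shifted $K$-theoretic insertion. In the unshifted setting the analogous duality between the stable Grothendieck polynomials $G_\lambda$ and their duals $g_\lambda$ is proved by Lam--Pylyavskyy via reverse Hecke insertion, but the shifted case must simultaneously handle primed and unprimed entries, remain compatible with the nontrivial numerator $1-\overline{x}_i y_j$, and reproduce the column/row weight $\y^T$. Proving that the insertion is well defined, bijective, and weight-preserving---especially along the diagonal, where the $GP_\lambda$ versus $GQ_\lambda$ distinction resides---is the genuinely new combinatorial content; by comparison the algebraic reduction above and the tableau formula for $GQ_\lambda$ are routine. As a consistency check one can confirm the identity in low degree against the explicit expansions of $\widehat{p}^K_k(\y)$ and $\widehat{q}^K_k(\y)$ recorded after the definition of the dual functions.
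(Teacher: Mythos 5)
The statement you are trying to prove is not a theorem in the paper: it is stated as a \emph{conjecture}, and the paper offers no proof of it. The authors only verify the one-row case ($gp_k(\y)=\widehat{p}^K_k(\y)|_{\beta=-1}$ and $gq_k(\y)=\widehat{q}^K_k(\y)|_{\beta=-1}$) and record a hook-sum formula relating $gp_k$ to the dual Grothendieck polynomials $g_{a1^{k-a}}$; the general case was open at the time of writing. So there is no proof in the paper to compare yours against, and the question is whether your proposal actually closes the conjecture on its own.

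It does not. Your algebraic reduction is sound: by the Cauchy identity $\Delta(\x;\y)=\sum_{\lambda\in\mathcal{SP}}Q^K_\lambda(\x)\,\widehat{p}^K_\lambda(\y)$ and the linear independence of the $Q^K_\lambda(\x)=GQ_\lambda(\x)$, the conjecture is indeed equivalent to the identity $\sum_{\lambda\in\mathcal{SP}}Q^K_\lambda(\x)\,gp_\lambda(\y)=\Delta(\x;\y)$ at $\beta=-1$ (and the analogous identity pairing $P^K_\lambda$ with $gq_\lambda$), and you have matched the $P/Q$ and $Tab/Tab'$ pairings correctly against the paper's definitions. But everything then hangs on the ``shifted $K$-theoretic insertion'' bijection, which you postulate rather than construct: you do not define the insertion, do not prove it is well defined or bijective, and do not verify that it produces exactly the weight $\y^T=\prod_{i\in T}y_i^{T_C(i)}\prod_{i'\in T}y_i^{T_R(i')}$ with its asymmetric column-count/row-count convention, nor that the recording tableaux land precisely in $Tab'(\lambda)$ versus $Tab(\lambda)$ in the two cases. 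As you yourself acknowledge, this is ``the genuinely new combinatorial content'' --- which is precisely to say that the proof's essential step is missing. What you have written is a reasonable research plan (and indeed close in spirit to how such dualities were later attacked in the literature via shifted Hecke insertion), but as it stands it is a reduction of one open identity to another open identity, not a proof.
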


The conjecture above is shown to be true for one  row case.

\begin{prop}
$$gp_{k}(\y)=\widehat{p}^K_{k}(\y)|_{\beta=-1},
\text{ and }
gq_{k}(\y)=\widehat{q}^K_{k}(\y)|_{\beta=-1}
\text{ for k=1,2,3,\ldots}.
.$$
\end{prop}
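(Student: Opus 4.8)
The plan is to prove both identities by computing the relevant generating functions in a single variable $t$ and matching them term by term. Throughout I specialize to $K$-theory at $\beta=-1$, where $\overline{t}=-t/(1+\beta t)=-t/(1-t)$ and $t\pf t=2t+\beta t^2=t(2-t)$. A direct computation then gives the closed form
\[
\Delta(t;\y)\big|_{\beta=-1}=\prod_{j\ge 1}\frac{1-\overline{t}\,y_j}{1-t\,y_j}
=\prod_{j\ge 1}\frac{1-t+t\,y_j}{(1-t)(1-t\,y_j)}=:\Phi(t).
\]
Since the $q$-functions are defined by $\Delta(t;\y)=\sum_k [t|\b]^k\,\widehat{q}^{K}_k(\y|\b)$ and $[t|0]^k=t^k$, we have $\widehat{q}^{K}_k(\y)|_{\beta=-1}=[t^k]\,\Phi(t)$. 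Likewise, from $\Delta(t;\y)=\sum_k[[t|\b]]^k\,\widehat{p}^{K}_k(\y|\b)$ together with $[[t|0]]^k=(t\pf t)\,t^{k-1}=(2-t)\,t^k$ and $\widehat{p}^{K}_0=1$, we get $\sum_{k\ge 1}\widehat{p}^{K}_k(\y)|_{\beta=-1}\,t^k=(\Phi(t)-1)/(2-t)$. The whole proof reduces to producing these two series combinatorially.

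For the $q$-case I would expand $\sum_{k\ge0}gq_k(\y)\,t^k$, where $t$ records the number of boxes, straight from the definition. A one-row element of $Tab(k)$ is a weakly increasing word in $1'<1<2'<2<\cdots$, so it decomposes uniquely, value by value, into a block of $a_i$ primed letters $i'$ followed by a block of $c_i$ unprimed letters $i$. By the weighting convention the primed block contributes $y_i$ once when $a_i\ge 1$ (rows containing $i'$), while each unprimed box contributes a factor $y_i$ (columns containing $i$), so the value-$i$ factor is
\[
\sum_{a\ge 0}\sum_{c\ge 0}y_i^{[a\ge 1]+c}\,t^{a+c}
=\frac{1}{1-t\,y_i}+\frac{t}{1-t}\cdot\frac{y_i}{1-t\,y_i}
=\frac{1-t+t\,y_i}{(1-t)(1-t\,y_i)}=:f_i.
\]
Multiplying over all $i$ gives $\prod_i f_i=\Phi(t)$, and extracting $[t^k]$ yields $gq_k(\y)=\widehat{q}^{K}_k(\y)|_{\beta=-1}$.

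For the $p$-case the same bookkeeping applies, but $Tab'(k)$ forces the leftmost (hence smallest) box to be primed. I would stratify by the smallest value-index $m$ that occurs: all smaller values are absent (factor $1$), the block of $m$ must begin with a prime so $a_m\ge 1$, contributing $A_m:=\frac{t}{1-t}\cdot\frac{y_m}{1-t\,y_m}$, and larger values are unconstrained (factor $f_i$). The decisive identity is $A_m=(f_m-1)/(2-t)$, the combinatorial shadow of $[[t|0]]^k=(2-t)t^k$. Hence
\[
\sum_{k\ge 1}gp_k(\y)\,t^k=\sum_{m\ge 1}A_m\prod_{i>m}f_i
=\frac{1}{2-t}\sum_{m\ge 1}(f_m-1)\prod_{i>m}f_i=\frac{\Phi(t)-1}{2-t},
\]
the last step being the telescoping $\sum_{m\ge1}\big(\prod_{i\ge m}f_i-\prod_{i\ge m+1}f_i\big)=\Phi(t)-1$. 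Comparing with the algebraic series gives $gp_k(\y)=\widehat{p}^{K}_k(\y)|_{\beta=-1}$. The routine steps are the two partial-fraction simplifications and the closed form of $\Phi(t)$. The main obstacle, and the only point needing genuine care, is the $p$-case: one must justify the stratification-and-telescoping as an identity of formal power series in $t$ with symmetric-function coefficients — in particular that the tail products $\prod_{i\ge m}f_i$ converge to $1$ in the appropriate completion — and one must match the weighting convention (primed letters counted once per row) so that $A_m$ emerges with exactly the normalization $A_m=(f_m-1)/(2-t)$.
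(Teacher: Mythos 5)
The paper states this proposition with no proof at all (it appears as a bare assertion following the conjecture), so there is no argument of the authors to compare yours against; your write-up supplies the missing proof, and it is correct. I checked the key steps. On the algebraic side: at $\beta=-1$ one has $\overline{t}=-t/(1-t)$, $t\pf t=t(2-t)$, $[t|0]^k=t^k$ and $[[t|0]]^k=(2-t)t^k$ for $k\geq 1$, so the defining expansions of $\widehat{q}^K_k$ and $\widehat{p}^K_k$ do become $\Phi(t)=\sum_{k\geq 0}t^k\,\widehat{q}^K_k(\y)|_{\beta=-1}$ and $\Phi(t)-1=(2-t)\sum_{k\geq 1}t^k\,\widehat{p}^K_k(\y)|_{\beta=-1}$. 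On the combinatorial side: a one-row tableau is determined by the multiplicities $(a_i,c_i)$ of primed/unprimed entries of each value, the paper's weight convention gives exactly $y_i^{[a_i\geq 1]+c_i}$ (primed letters once per row, unprimed once per column, and in one row every box is its own column), so your $f_i$ is the correct value-$i$ factor; the identity $f_m-1=\frac{t\,y_m(2-t)}{(1-t)(1-t\,y_m)}=(2-t)A_m$ checks out; and the telescoping is legitimate coefficientwise, since every term of $(f_m-1)\prod_{i>m}f_i$ contains $y_m$ to a positive power, so only finitely many $m$ contribute to any fixed monomial. Cancelling the factor $2-t$ at the end is valid because $2-t$ is a non-zero-divisor in power series with coefficients in (the completion of) $\Lambda(\y)$. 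One caution, so you are not alarmed when cross-checking: your proof forces $\widehat{p}^K_2(\y)|_{\beta=-1}=gp_2(\y)=P_2(\y)+h_1(\y)$, whereas the paper's displayed example $\widehat{p}^\L_2(\y)=P_2(\y)+a_{1,1}h_1(\y)$ would give $P_2-h_1$ at $a_{1,1}=\beta=-1$. Direct expansion of $\Delta(t;\y)$ in the $K$-theory case gives $\widehat{p}^K_2(\y)=P_2(\y)-\beta h_1(\y)$, confirming your value; the paper's example has a sign typo (it should read $P_2(\y)-a_{1,1}h_1(\y)$, in line with the correctly stated $\widehat{q}^\L_2(\y)=Q_2(\y)-a_{1,1}h_1(\y)$), so the discrepancy is not an error in your argument.
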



There is a formula which 
relates $gp_{k}(\y)$ to 
the stable dual Grothendieck polynomials $g_\lambda(\y)$.

\begin{prop}(hook sum formula)
$$gp_k(\y)=\displaystyle\sum_{a=1}^k g_{a1^{k-a}}(\y).$$
\end{prop}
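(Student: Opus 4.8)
The plan is to prove the identity combinatorially, by exhibiting a weight-preserving bijection between the single-row shifted tableaux counted by $gp_k(\y)$ and the reverse plane partitions of hook shape that compute $\sum_{a=1}^{k} g_{a1^{k-a}}(\y)$. Recall that, following Lam--Pylyavskyy, $g_\mu(\y)=\sum_T \y^{T}$ where $T$ ranges over reverse plane partitions of shape $\mu$ (entries weakly increasing along rows and down columns), and the exponent of $y_i$ in $\y^{T}$ is the number of \emph{columns} of $T$ that contain an entry equal to $i$; this is the convention for which $g_\mu=s_\mu+(\text{lower degree})$, as one checks on $g_{(k)}=h_k$ and on the single column. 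For a hook $\mu=(a,1^{k-a})$, such a $T$ consists of a first column of length $k-a+1$ (the corner together with the leg) and an arm of $a-1$ further single-box columns; hence $y_i$ picks up an exponent equal to $[\,i\text{ occurs in the first column}\,]$ plus the number of arm boxes equal to $i$.

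First I would record the weight of a single-row element $T'$ of $Tab'(k)$. Writing $p_i$ and $u_i$ for the numbers of boxes of $T'$ equal to $i'$ and to $i$ respectively, the defining weight $\y^{T'}=\prod_i y_i^{T_C(i)}\prod_i y_i^{T_R(i')}$ becomes $\prod_i y_i^{\,u_i+[\,p_i>0\,]}$: in one row each unprimed $i$ contributes once per column, hence $u_i$ times, while all primed copies $i'$ lie in the single row and so contribute only the indicator $[\,p_i>0\,]$. The leftmost-primed condition says precisely that the smallest value $v$ occurring in $T'$ satisfies $p_v>0$.

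Then I would define the map $T'\mapsto T$ by letting the arm of $T$ consist of the unprimed content of $T'$, namely the value $i$ repeated $u_i$ times arranged weakly increasingly (so $a-1=\sum_i u_i$), and the first column of $T$ consist of the primed content, namely the value $i$ repeated $p_i$ times arranged weakly increasingly (so its length is $\sum_i p_i=k-a+1$, and indeed $(a-1)+(k-a+1)=k$). The corner is $\min\{i:p_i>0\}$, which by the leftmost-primed condition equals $v$ and satisfies $v\le\min\{i:u_i>0\}$, so the first row of $T$ is weakly increasing and $T$ is a bona fide reverse plane partition of hook shape $(a,1^{k-a})$. The inverse reads $p_i$ off as the multiplicity of $i$ in the first column and $u_i$ off the arm, and row-weak-increasingness of $T$ at the corner returns the leftmost-primed condition. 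Since the number of columns of $T$ containing $i$ is $[\,p_i>0\,]+u_i$, the two weights agree.

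The point to get right is the bookkeeping separating \emph{presence} from \emph{multiplicity}: repeated primed letters $i'$ in $T'$ do not change its weight but do lengthen the row, and correspondingly repeated entries $i$ in the first column of $T$ do not change the column-weight of $g$ but do lengthen the leg. It is exactly this freedom in the leg length $k-a$, equivalently in $\sum_i p_i$, that forces the right-hand side to range over \emph{all} hooks $a=1,\dots,k$ rather than a single shape. Verifying that this freedom is matched in both directions, together with the identification of the leftmost-primed constraint with validity of the hook reverse plane partition at its corner, is where I would spend the most care; once these points are checked, the proposition follows by summing weights over the bijection.
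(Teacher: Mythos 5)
Your proof is correct and takes essentially the same approach as the paper: the paper's proof consists of the single assertion that one can ``easily make a bijection between tableaux of their tableaux formulas'' (citing Lam--Pylyavskyy), and your explicit weight-preserving bijection --- sending the primed content of a row in $Tab'(k)$ to the first column of a hook reverse plane partition and the unprimed content to its arm, with the leftmost-primed condition matching weak increase at the hook's corner --- is precisely that bijection, worked out with the presence-versus-multiplicity bookkeeping done correctly.
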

\begin{proof}
We can easily make a bijection between tableaux of their tableaux formulas.
\cite{Lam-Pyl}
\end{proof}
\begin{conj} (for staircase)
\begin{center}
$gp_{\rho_k}(\y)=g_{\rho_k}(\y)$ for $\rho_k=(k,k-1,\ldots,2,1)$.
\end{center}
\end{conj}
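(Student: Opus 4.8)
The plan is to prove the identity combinatorially, by exhibiting a weight-preserving bijection between the two families of tableaux defining the two sides. Recall that, following Lam--Pylyavskyy \cite{Lam-Pyl}, the dual stable Grothendieck polynomial can be written as $g_{\rho_k}(\y)=\sum_{U}\y^{U}$, where $U$ runs over reverse plane partitions of the straight shape $\rho_k$ (unprimed entries, weakly increasing along rows and down columns) and the exponent of $y_i$ in $\y^{U}$ is the number of \emph{columns} of $U$ containing $i$; whereas $gp_{\rho_k}(\y)=\sum_{T\in Tab'(\rho_k)}\y^{T}$, where primed letters are counted by rows and unprimed letters by columns. The decisive structural input is that the staircase is self-conjugate, $\rho_k=\rho_k'$, so that reflection across the main diagonal $(i,j)\mapsto(j,i)$ is a symmetry of the shape; this is precisely what should let us trade the row-counting of the primed letters for the column-counting in a reverse plane partition. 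As a guide and a consistency check, comparing the top-degree components of both sides reduces the statement to the classical identity $P_{\rho_k}(\y)=s_{\rho_k}(\y)$ (equivalently $Q_{\rho_k}=2^{\,\ell(\rho_k)}s_{\rho_k}$, which I have verified directly for $k=1,2$), whose standard bijective proof matches shifted staircase tableaux with straight-shape semistandard tableaux. Our task is to promote that classical bijection to the $K$-theoretic, reverse-plane-partition level.

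First I would set up the forward map $\Phi\colon Tab'(\rho_k)\to \mathrm{RPP}(\rho_k)$. The guiding idea is to reflect each primed entry $i'$ sitting at a cell $(a,b)$ to the cell $(b,a)$ and record there the unprimed value $i$, keeping the already-unprimed entries in place: a primed letter contributes to $\y^{T}$ through the rows it occupies, and after reflection it contributes through the columns, which is exactly the reverse-plane-partition weight. The ``leftmost box primed'' condition is what organizes the primed letters into an order ideal that meshes, after reflection, with the unprimed region. A naive cell-by-cell reflection, however, produces collisions as soon as a reflected primed entry lands on an occupied cell of equal or smaller value (as one already sees for the column $(1,1),(2,1)$ of $\rho_2$ when both entries carry the same primed value). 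The map must therefore be upgraded into a reflection followed by a sorting/insertion step that rearranges equal entries within each column; I would then write down the inverse $\Psi$, which on the self-conjugate shape re-primes the entries lying strictly on one side of the diagonal.

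The verification splits into the three expected points: that $\Phi$ indeed lands in $\mathrm{RPP}(\rho_k)$ (weak increase along rows and columns is preserved), that $\Psi\circ\Phi=\mathrm{id}$ and $\Phi\circ\Psi=\mathrm{id}$, and that $\y^{T}=\y^{\Phi(T)}$. I expect the main obstacle to be exactly the weight-matching together with well-definedness along the diagonal interface, because the reverse-plane-partition weight counts a repeated value only once per column, while in $T$ a value may occur both primed (tallied by rows) and unprimed (tallied by columns). Concretely, one must show that the collapsing of equal entries within a column of $U$ corresponds bijectively to the allowed repetitions of a letter in $T$, and that the reflection-and-sorting introduces no genuine collisions. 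This is precisely the feature that is special to the self-conjugate staircase and that fails for general shapes -- consistent with $gp_\lambda\neq g_\lambda$ in general and with the hook-sum formula of Proposition 5.3, which exhibits $gp_k=\sum_{a=1}^{k}g_{a1^{k-a}}$ as a sum of several $g$'s already for $k\ge 2$. Should the explicit bijection prove unwieldy, a fallback is an induction on $k$ via the divided-difference recursion for $\widehat{p}^{K},\widehat{q}^{K}$ recorded above, or an algebraic comparison of the Schur- and Schur-$P$-expansions of the two sides; but I would expect the self-conjugate bijection to give the cleanest and most illuminating argument.
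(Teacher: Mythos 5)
First, a point of orientation: the statement you are proving is left \emph{open} in the paper --- it is stated as a conjecture, and the authors only record the one-row results (Propositions 5.2 and 5.3) as evidence. So there is no proof in the paper to compare against, and your text must stand entirely on its own. Judged that way, what you have written is a plan for a proof, not a proof. Its central object --- the weight-preserving bijection $\Phi\colon Tab'(\rho_k)\to \mathrm{RPP}(\rho_k)$ --- is never constructed: you yourself observe that the naive reflection $(a,b)\mapsto(b,a)$ produces collisions, and you defer the repair to an unspecified ``sorting/insertion step.'' Likewise the inverse $\Psi$, the check that $\Phi$ lands in reverse plane partitions, and the weight identity $\y^{T}=\y^{\Phi(T)}$ are announced rather than carried out. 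Since the collision problem and the mismatch between row-counting of primed letters and column-counting in an RPP are exactly where the difficulty of the conjecture lives, what is missing is the entire mathematical content; the fallbacks you mention (induction via the divided-difference recursion, or comparing Schur expansions) are equally unsubstantiated one-line suggestions.

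Second, there is a concrete flaw in your structural premise. The tableaux in $Tab(\lambda)$, $Tab'(\lambda)$ are attached to a \emph{strict} partition and, as in the classical theory of Schur $P,Q$-functions (and as is forced if the top-degree term of $gp_\lambda$ is to be $P_\lambda$), they live on the \emph{shifted} diagram of $\rho_k$, namely the cells $\{(i,j): 1\le i\le j\le k\}$; the condition that the leftmost box of each row be primed is a condition on the diagonal boxes. Reflection across the main diagonal is therefore \emph{not} a symmetry of the domain shape: it carries the shifted staircase onto the lower triangle, not onto itself, while the target $g_{\rho_k}$ lives on the straight staircase $\{(i,j): j\le k+1-i\}$, a different set of cells altogether. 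What self-conjugacy of the straight staircase actually buys is the classical fold bijection underlying $P_{\rho_k}=s_{\rho_k}$ (shifted staircase tableaux $\leftrightarrow$ staircase SSYT), and you are right that this is the natural guide; but promoting that fold to the $K$-theoretic level, where repeated entries collapse once per column or per row, is precisely the open problem, and your sketch supplies no mechanism for it. Until $\Phi$ is written down explicitly and the three verifications are performed, the statement remains exactly as open as it is in the paper.
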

\vspace{0.5cm}

{\bf Remark}:

For a geometric reason,
$gp_\lambda(\y)$ and $gq_\lambda(\y)$ should be 
(signed) positive linear combinations of dual
Grothendieck polynomials $g_\mu(\y)$ (hence Schur polynomials $s_\mu(\y)$).

\section{Appendix}

Here we use doubly infinite sequence $\b_{\pm}=(\ldots,b_{-2},b_{-1},b_0,b_{1},b_{2},\ldots)$.
Using Lazard ring $\L$ it is possible to extend Molev's dual Schur functions
$\widehat{s}_\lambda(\x||{\bf a})$
\cite{Mol}
to universal setting  as follows.
(A geometric meaning of Molev's dual Schur function is explained in \cite{Lam-Shi}.) 
For a partition 
$\lambda=(\lambda_1\geq\lambda_2\geq\cdots\geq\lambda_n\geq 0)\in \mathcal{P}_n$,
we define
$$s^\L_\lambda(x_1,\ldots,x_n||\b_{\pm}):=\displaystyle
\sum_{w\in S_n} w\left(\frac{(x_1||\:\b_{\pm})^{\lambda_1+{n-1}}_{n}
(x_2||\:\b_{\pm})^{\lambda_2+{n-2}}_{n}
\cdots
(x_n||\:\b_{\pm})^{\lambda_{n}}_{n}
}{\displaystyle\prod_{1\leq i<j\leq n} (x_i+_F \overline{x}_j)} \right),$$

where
$(t\:||\:\b_{\pm})^{k}_{n}:=\displaystyle\prod_{i=1}^{k} (t\pf b_{n+1-i})$.

For a partition $\mu$, we define
a sequence as
$\overline{b}_{I-\mu}:=(\overline{b}_{1-\mu_1},\overline{b}_{2-\mu_2},\ldots)$.

\begin{prop}(vanishing)
\begin{itemize}
\item[(1)]
If $\mu\not\supset \lambda$ then
\hspace{2.5cm}
$
s^\L_\lambda(\overline{b}_{I-\mu}||\:\b_{\pm})=0.
$
\item[(2)]
$$
s^\L_\lambda(\overline{b}_{I-\lambda}||\:\b_{\pm})=
\prod_{(i,j)\in \lambda}(\overline{b}_{i-\lambda_{i}}\pf b_{\lambda'_{j}-j+1} ).
$$

\end{itemize}
\end{prop}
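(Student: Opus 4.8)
The plan is to mirror the proof of the vanishing property for $P^\L,Q^\L$ (Proposition~\ref{vanishing}), reducing everything to bookkeeping of which single factors $(x_c\pf b_j)$ survive the substitution. First I would reduce to finitely many variables, assuming $\ell(\lambda),\ell(\mu)\le n$, and record the basic fact (a special case of Lemma~3.1) that $\overline{b}_\nu\pf b_j$ vanishes precisely when $j=\nu$. Writing $\nu^\lambda_a:=a-\lambda_a$ and $\nu^\mu_c:=c-\mu_c$, the factorial power attached to the variable $x_c$ in the $w$-term (so $c=w(a)$, $a=w^{-1}(c)$) expands as $\prod_{j=\nu^\lambda_a+1}^{n}(x_c\pf b_j)$; hence under $x_c=\overline{b}_{\nu^\mu_c}$ this factor vanishes iff $\nu^\lambda_a<\nu^\mu_c\le n$, and since $\nu^\mu_c\le c\le n$ always holds, the criterion is simply $\nu^\mu_{w(a)}>\nu^\lambda_a$. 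A preliminary point to check is that the individual $w$-terms may be evaluated separately: because $\lambda,\mu$ are partitions, the sequences $\nu^\lambda,\nu^\mu$ are strictly increasing, so the substituted denominator factors $\overline{b}_{\nu^\mu_{w(i)}}\pf b_{\nu^\mu_{w(j)}}$ have invertible leading term $b_{\nu^\mu_{w(j)}}-b_{\nu^\mu_{w(i)}}\neq0$ and the substitution meets no pole.

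For part (1) I would argue by contradiction: a term $w$ fails to vanish exactly when $\nu^\mu_{w(a)}\le\nu^\lambda_a$ for all $a$, i.e. $\nu^\mu_c\le\nu^\lambda_{w^{-1}(c)}$ for all $c$. Using that both sequences are strictly increasing, a pigeonhole argument shows this matching condition forces $\nu^\mu_c\le\nu^\lambda_c$ for every $c$: if $\nu^\mu_{c_0}>\nu^\lambda_{c_0}$ for some $c_0$, then $\nu^\mu_c>\nu^\lambda_{c_0}$ for all $c\ge c_0$, which forces $w^{-1}$ to map the $n-c_0+1$ indices $\{c_0,\dots,n\}$ into the $n-c_0$ indices $\{c_0+1,\dots,n\}$, impossible. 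But $\nu^\mu_c\le\nu^\lambda_c$ means $\mu_c\ge\lambda_c$ for all $c$, i.e. $\mu\supset\lambda$. Contrapositively, if $\mu\not\supset\lambda$ then every $w$-term contains a vanishing numerator factor, so the whole sum is $0$.

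For part (2) I set $\mu=\lambda$, so $\nu^\mu=\nu^\lambda=:\nu$ and the criterion at position $c=w(a)$ becomes $\nu_{w(a)}>\nu_a$, i.e. $w(a)>a$. Every $w\ne e$ has a smallest non-fixed point $a$, which necessarily satisfies $w(a)>a$, so all terms except $w=e$ vanish. It then remains to evaluate the $w=e$ term, namely $\prod_{a=1}^n\prod_{j=\nu_a+1}^n(\overline{b}_{\nu_a}\pf b_j)$ divided by $\prod_{1\le i<j\le n}(\overline{b}_{\nu_i}\pf b_{\nu_j})$. Grouping by rows, for fixed $i$ the denominator contributes exactly the factors with $b$-indices $\{\nu_{i+1},\dots,\nu_n\}$, a subset of the numerator's index set $\{\nu_i+1,\dots,n\}$; cancelling them leaves precisely $\lambda_i=i-\nu_i$ factors, indexed by $\{\nu_i+1,\dots,n\}\setminus\{\nu_{i+1},\dots,\nu_n\}$. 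The final step is the standard bijection identifying this set with $\{\lambda'_j-j+1:1\le j\le\lambda_i\}$, which turns the residual product into $\prod_{(i,j)\in\lambda}(\overline{b}_{i-\lambda_i}\pf b_{\lambda'_j-j+1})$, as claimed.

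The computations in each part are short once the criterion $\nu^\mu_{w(a)}>\nu^\lambda_a$ is in place; the one genuinely delicate point is the legitimacy of termwise evaluation, which I expect to be the main obstacle and which must be justified by the strict monotonicity of $\nu$ (equivalently, by the invertibility of the substituted Vandermonde-type denominators in the formal, non-alternating setting). The combinatorial core---both the pigeonhole step in (1) and the arm-index bijection in (2)---is elementary and uses only that $\lambda$ and $\mu$ are partitions.
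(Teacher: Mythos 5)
Your proof is correct and takes essentially the approach the paper intends: the paper states this appendix proposition without proof, but its proof of the analogous vanishing property for $P^\L_\lambda,Q^\L_\lambda$ (Proposition~\ref{vanishing}) is exactly your termwise-evaluation argument---each $w$-term acquires a vanishing numerator factor $\overline{b}_{\nu}\pf b_{\nu}$ unless $\mu\supset\lambda$, and for $\mu=\lambda$ only the $w=e$ term survives and yields the product formula. Your write-up in fact supplies the details the paper leaves implicit (the pigeonhole step and the complementarity of $\{i-\lambda_i\}$ and $\{\lambda'_j-j+1\}$ in $\Z$), with one wording caveat: the evaluated denominator factors $\overline{b}_{i-\mu_i}\pf b_{j-\mu_j}$ are nonzero but \emph{not} invertible in $\L[[\b_{\pm}]]$, so the legitimacy of termwise evaluation should be phrased via passage to the fraction field of this integral domain rather than via ``invertibility.''
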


Then we define $\widehat{s}^\L_\lambda(\y||\b_{\pm})$ as

$$
\prod_{i=1}^{n}\prod_{j=1}^{\infty}\frac{1-\overline{b}_i y_j}{1-x_i y_j}
=
\sum_{\lambda\in P_n}
s^\L_\lambda(x_1,\ldots,x_n||\:\b_{\pm}) \:\widehat{s}^\L_\lambda(\y||\:\b_{\pm}).
$$

We can also take limit $n\to \infty$ to get $s^\L_\lambda(\x||\:\b_{\pm})$.
(specialization map is  the evaluation $x_n=\overline{b}_n$).

\vspace{1cm}


\end{document}